\documentclass[12pt]{article}
\usepackage{hyperref}
\usepackage{a4wide}
\usepackage{tikz}
\usepackage{amsmath,amsfonts,amssymb,latexsym,graphics,epsfig,url}
\usepackage{color}
\usepackage{amsthm,enumerate}
\usepackage[english]{babel}
\usepackage{graphicx}
\usepackage{soul}
\usepackage[all]{xy} %para diagramas conmutativos

\newcommand\blfootnote[1]{%
	\begingroup
	\renewcommand\thefootnote{}\footnote{#1}%
	\addtocounter{footnote}{-1}%
	\endgroup
}

\newtheorem{theorem}{Theorem}[section]
\newtheorem{proposition}[theorem]{Proposition}

\newtheorem{corollary}[theorem]{Corollary}

\newtheorem{guess}[theorem]{Conjecture}
\newtheorem{example}[theorem]{Example}

\DeclareMathOperator{\Irep}{Irep}

\DeclareMathOperator{\rank}{rank}
\DeclareMathOperator{\Sp}{Sp}

\DeclareMathOperator{\tr}{tr}

\def\C{\mathbb C}

\def\G{\Gamma}

\begin{document}

%\title{Spectra of arbitrary lifts of graphs}
%\thanks{Partially supported by...}}
%\author{C. Dalf\'o\\
%	\small{Departament  de Matem\`atica}\\
%	\small{Universitat de Lleida, Igualada (Barcelona), Catalonia}\\
%	\vspace{.5cm}
%	\small{\url{cristina.dalfo@matematica.udl.cat}}\\
%	M. A. Fiol\\
%	\small{Departament de Matem\`atiques}\\
%	\small{Barcelona Graduate School of Mathematics,}\\
%	\small{Universitat Polit\`ecnica de Catalunya, Barcelona, Catalonia}\\
%	\vspace{.5cm}
%	\small{\url{miguel.angel.fiol@upc.edu}}\\
%    So\v{n}a Pavl\'ikov\'a, Jozef \v Sir\'a\v n\\
%    \small{Deptartment of Mathematics and Descriptive Geometry}\\
%	{\small Slovak University of Technology, Bratislava, Slovakia}\\
%		\vspace{.5cm}
%	\small{\url{sona.pavlikova@stuba.sk, jozef.siran@open.ac.uk}}
%}

\title{Spectra and eigenspaces of arbitrary lifts of graphs
	}
\author{C. Dalf\'o\footnote{Departament  de Matem\`atica, Universitat de Lleida, Igualada (Barcelona), Catalonia, {\tt{cristina.dalfo@matematica.udl.cat}}},
	M. A. Fiol\footnote{Departament de Matem\`atiques, Universitat Polit\`ecnica de Catalunya; and Barcelona Graduate School of Mathematics, Barcelona, Catalonia, {\tt{miguel.angel.fiol@upc.edu}}},
	 S. Pavl\'ikov\'a\footnote{Department of Mathematics and Descriptive Geometry,
	 	Slovak University of Technology, Bratislava, Slovak Republic,
	 	{\tt {sona.pavlikova@stuba.sk}}}, and
	J. \v{S}ir\'a\v{n}\footnote{Department of Mathematics and Statistics,
		The Open University, Milton Keynes, UK; and
		Department of Mathematics and Descriptive Geometry,
		Slovak University of Technology, Bratislava, Slovak Republic,
		{\tt {j.siran@open.ac.uk}}}
}
\date{}

\maketitle

%\date{}

\begin{abstract}
We describe, in a very explicit way, a method for determining the spectra and bases of all the corresponding eigenspaces of arbitrary lifts of graphs (regular or not).
\end{abstract}

\noindent{\it Keywords:}
Graph; covering; lift; spectrum; eigenspace.
%Graph; Adjacency matrix; Spectrum;  Eigenvalues; Local
%multiplicities; Walk-regular graph.

\noindent{\it 2010 Mathematics Subject Classification:} 05C20, 05C50, 15A18.

\blfootnote{\begin{minipage}[l]{0.3\textwidth} \includegraphics[trim=10cm 6cm 10cm 5cm,clip,scale=0.15]{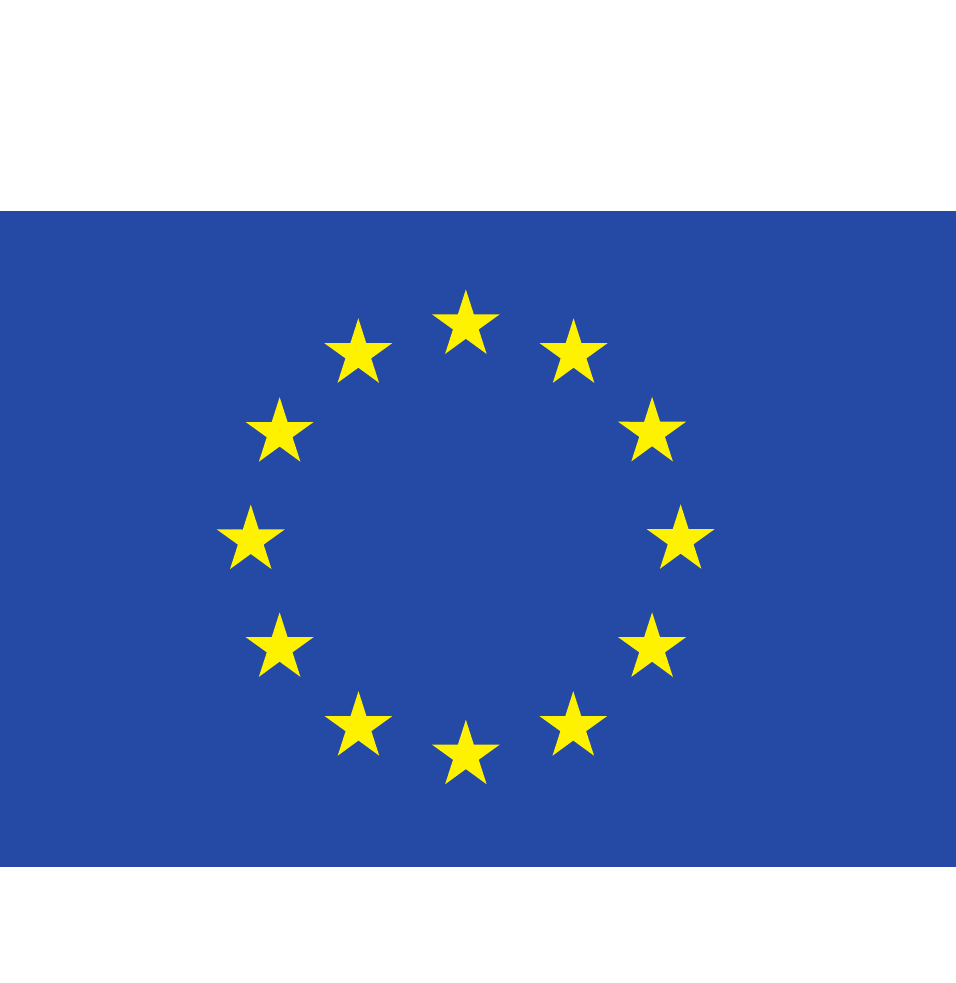} \end{minipage}  \hspace{-2cm} \begin{minipage}[l][1cm]{0.79\textwidth}
		The first author has received funding from the European Union's Horizon 2020 research and innovation programme under the Marie Sk\l{}odowska-Curie grant agreement No 734922.
\end{minipage}}

\section{Introduction}\label{sec:int}

For a graph $\Gamma$ with adjacency matrix $A$, the {\em spectrum} of $\Gamma$ is defined to be the spectrum of $A$. While the structure of a graph determines its spectrum, an important converse question in spectral graph theory is to what extent the spectrum determines the structure of a graph; see for example the classical textbooks of Biggs \cite{biggs}, and Cvetkovi\'c, Doob, and Sachs \cite{cds95}. In particular, a persisting problem is to show whether or not a given graph is completely determined by its spectrum (see van Dam and Haemers~\cite{vdh09}), which has led to the following.

\begin{guess}[\cite{vdh09}]%[Van Dam, Haemers]
	Almost every graph is determined by its spectrum.
\end{guess}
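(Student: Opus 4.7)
The conjecture asserts that if $\mathrm{DS}_n$ denotes the set of labelled graphs on $n$ vertices that are determined by their adjacency spectrum and $\mathcal{G}_n$ the set of all labelled graphs on $n$ vertices, then $|\mathrm{DS}_n|/|\mathcal{G}_n| \to 1$ as $n \to \infty$. The plan is to upper bound the number of graphs admitting a non-isomorphic cospectral mate and show this bound is $o(2^{\binom{n}{2}})$.

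The first step is to exploit the fact that the spectrum of $A$ encodes all power traces $\tr(A^k)$, and hence the closed-walk generating function. Standard combinatorial identities convert these power traces into counts of edges, closed walks, and (after inclusion--exclusion) certain small subgraphs. A natural line of attack is then to show that for a random graph $G \in G(n,1/2)$, the sequence of subgraph counts obtainable from $\tr(A^k)$ concentrates sharply around its mean in such a way that, with probability $1 - o(1)$, it already determines $G$ up to isomorphism. One would combine this with standard random-graph tools (canonical labelling, automorphism counts, second-moment estimates) to produce an explicit event of probability $1 - o(1)$ on which no cospectral mate can exist.

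The second step is to account for graphs \emph{outside} this high-probability event. Here I would enumerate the known constructions of non-isomorphic cospectral pairs --- principally Godsil--McKay switching, Schwenk-type tree surgery, and constructions based on equitable partitions --- and bound the number of graphs on which each mechanism can act. Each such construction requires a prescribed substructure (e.g., an appropriate equitable partition, or a twin pair of attachment points) whose occurrence in $G(n,1/2)$ has exponentially small probability. A union bound over a sufficiently rich family of these constructions would then account for a vanishing fraction of $\mathcal{G}_n$.

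The hard part, and the reason the conjecture remains open, is closing the gap between known obstructions and all possible ones. There is no structural classification of cospectral pairs, so one cannot presently exclude the possibility that a positive fraction of graphs has a cospectral mate arising from some uncatalogued mechanism. To overcome this, one would need a robust converse to the moment identities --- a statement to the effect that among graphs satisfying generic non-degeneracy conditions any two with matching spectra must be isomorphic --- and proving such a statement appears to lie well beyond current techniques in spectral and random graph theory.
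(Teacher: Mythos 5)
There is a genuine gap here, but it is one you have already named yourself: your text is a research programme, not a proof, and the statement in question is an \emph{open conjecture}. The paper does not prove it either --- it is stated as a Conjecture attributed to van Dam and Haemers \cite{vdh09}, and the only ``evidence'' offered is the table of computed fractions of non-DS graphs for $n\le 12$. So there is no proof in the paper against which to compare your argument; any complete derivation you produced would have been a major new result rather than a reconstruction.

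Concretely, the step that fails is the second one. Your plan is to (a) show that a typical graph in $G(n,1/2)$ lies in a high-probability event where the spectral moments pin down the isomorphism type, and (b) union-bound over the exceptional graphs by enumerating cospectral constructions (Godsil--McKay switching, Schwenk surgery, equitable-partition tricks). Step (b) cannot be completed, because there is no classification of mechanisms producing cospectral mates, so the union bound has no well-defined index set; you acknowledge this, which is to your credit, but it means the argument terminates before reaching the conclusion. Step (a) is also not known to work: the power traces $\tr(A^k)$ determine only the counts of closed walks, and closed-walk counts are exactly the invariants that cospectral pairs share, so concentration of these counts cannot by itself separate a graph from a potential mate. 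A cautionary data point is Schwenk's theorem that almost every \emph{tree} has a cospectral non-isomorphic mate --- the opposite conclusion in a closely related model --- which shows that ``generic spectral rigidity'' is not automatic and genuinely new ideas are required. In short: your proposal is a reasonable sketch of why one might believe the conjecture and where the difficulty lies, but it is not a proof, and the paper offers none to compare it with.
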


\noindent The table below, showing the ratio of graphs on $n\le 12$ vertices not determined by their spectrum (`n-DS') to all graphs of order $n$, may be taken as a support of this conjecture.

\begin{center}
\begin{tabular}{|c@{~~}||c@{~~}c@{~~}c@{~~}c@{~~}c@{~~}c@{~~}c@{~~}c@{~~}c@{~~}c@{~~}c@{~~}c@{~~}|}
	\hline
	$n$ & 1 & 2 & 3 & 4 & 5 & 6 & 7 & 8 & 9 & 10 & 11 & 12 \\
	\hline
n-DS & 0 & 0 & 0 & 0 & 0.059 & 0.064& 0.105 & 0.139 & 0.186 & 0.213 & 0.211 & 0.188\\
\hline
\end{tabular}
\end{center}

Considerable effort has thus been devoted to (partial or entire) identification of spectra of some interesting families of graphs. For instance, Godsil and Hensel \cite{gh92} explicitly studied the problem of determining the spectrum of certain voltage graphs. Lov\'asz \cite{l75} provided a formula that expresses the eigenvalues of a graph admitting a transitive group of automorphisms in terms of group characters. In the particular case of Cayley graphs (when the automorphism group contains a subgroup acting regularly on vertices), Babai \cite{ba79} derived a more handy formula by different methods (and, as a corollary, obtained the existence of arbitrarily many Cayley graphs with the same spectrum). In fact, Babai's formula also applies to digraphs and arc-colored Cayley graphs. Following these findings, Dalf\'o, Fiol, and \v{S}ir\'a\v{n} \cite{dfs19} considered a more general construction and derived a method for determining the spectrum of a regular lift of a `base' (di)graph equipped with an ordinary voltage assignment, or, equivalently, the spectrum of a regular cover of a (di)graph. Recall, however, that by far not all coverings are regular; a description of arbitrary graph coverings by the so-called permutation voltage assignments was given by Gross and Tucker \cite{gt77}.
\smallskip

In this paper, we generalize our previous results to arbitrary lifts of graphs (regular or not). Our method not only gives the complete spectra of lifts but also provides bases of the corresponding eigenspaces, both in a very explicit way.
\smallskip

Of course, as a consequence, our method also furnishes the associated characteristic polynomials. To summarize the work previously done in this direction, Kwak and Lee \cite{kl92} and Kwak and Kwon \cite{kk01} dealt with the cases of characteristic polynomials of lifts with voltages in Abelian or dihedral groups. Furthermore, Mizuno and Sato \cite{ms95} obtained a formula for the characteristic polynomial of a regular covering with voltages in the symmetric group, whereas Feng, Kwak, and Lee \cite{fkl04} also solved the case of graph coverings that are not necessarily regular. In these papers, the characteristic polynomials are essentially given in terms of `large' matrices (based on the results of Kwak and Lee \cite{kl92}, where the adjacency matrix of the lift graph was expressed in terms of Kronecker products).
\smallskip

Our approach, in contrast, uses relatively `small' quotient-like matrices derived from the base graph to completely determine the spectrum of the lift (and eigenspace bases).
\smallskip

The paper is organized as follows. In the next section, we recall the notions of permutation voltage assignments on a graph together with the associated lifts, along with an equivalent description of these in terms of relative voltage assignments, and pointing out also their relationship with regular coverings. In Section \ref{sec:basic},  we revisit some fundamental facts in representation theory of groups, and also present a new result on group  representations, which could be of interest on its own. Section \ref{sec:main-result} deals with the main result, where the complete spectrum of a relative lift graph is determined, together with bases of the associated eigenspaces. Our method is illustrated by an example in Section \ref{sec:example}. Finally, in the last section, we discuss the case of regular lifts of digraphs in the light of applications of group characters.
%\smallskip

%representation of the lifted digraph with a quotient-like matrix whose size equals the order of the (much
%smaller) base digraph. In particular, it is shown that such a matrix can be used to deduce combinatorial
%properties of the lifted digraph. Following this approach, and as a main result, Section \ref{sec:sp} presents
%a new method to completely determine the spectrum of the lift by using only the spectrum of the quotient-like
%matrix and the (irreducible) characters of the group. The results are illustrated by an example.

%For the concepts and results about graphs and digraphs not presented here, we refer the reader to some of the
%basic textbooks on the subject; for instance,  Diestel~\cite{d10}.

\section{Lifts and permutation voltage assignments}

Let $\Gamma$ be an undirected graph (possibly with loops and multiple edges) and let $n$ be a positive integer. As usual in algebraic and topological graph theory, we will think of every undirected edge joining vertices $u$ and $v$ (not excluding the case when $u=v$) as consisting of a pair of oppositely directed arcs; if one of them is denoted $a$, then the opposite one will be denoted $a^-$. Let $V=V(\Gamma)$ and $X=X(\Gamma)$ be the sets of vertices and arcs of $\Gamma$. 

\subsection{Permutation voltage assignments}
Let $G$ be a subgroup of the symmetric group ${\rm Sym}(n)$, that is, a permutation group on the set $[n]=\{1,2, \ldots, n\}$. A {\em permutation voltage assignment} on the graph $\Gamma$ is a mapping $\alpha: X\to G$ with the property that $\alpha(a^-)=(\alpha(a))^{-1}$ for every arc $a\in X$. Thus, a permutation voltage assignment allocates a permutation of $n$ letters to each arc of the graph, in such a way that a pair of mutually reverse arcs forming an undirected edge receives mutually inverse permutations.
%\textcolor{blue}{Moreover, we assume that the permutations $\alpha(a)$, with  $a\in X$, generate the whole group $G$. }

The graph $\Gamma$ together with the permutation voltage assignment $\alpha$ determine a new graph $\Gamma^{\alpha}$, called the {\em lift} of $\Gamma$, which is defined as follows. The vertex set $V^{\alpha}$ and the arc set $X^{\alpha}$ of the lift are simply the Cartesian products $V\times [n]$ and $X\times [n]$. As regards incidence in the lift, for every arc $a\in X$ from a vertex $u$ to a vertex $v$ for $u,v\in V$ (possibly, $u=v$) in $\Gamma$ and for every $i\in [n]$, there is an arc $(a,i)\in X^{\alpha}$ from the vertex $(u,i)\in V^{\alpha}$ to the vertex $(v,i\alpha(a))\in V^{\alpha}$. Note that we write the argument of a permutation to the left of the symbol of the permutation,
with the composition being read from the left to the right.%
\smallskip

If $a$ and $a^{-}$ are a pair of mutually reverse arcs forming an undirected edge of $G$, then for every $i\in [n]$ the pair $(a,i)$ and $(a^-,i\alpha(a))$ form an undirected edge of the lift $\Gamma^{\alpha}$, making the lift an undirected graph in a natural way.
If $\Gamma$ is connected, then the lift $\Gamma^{\alpha}$ is connected if and only if the {\em local group} at a vertex $u\in V$ (that is, the subgroup of $G$ generated by the voltage products on all closed walks rooted at $u$) is a {\em transitive} permutation group on the set $[n]$, see Ezell \cite{e79}.
\smallskip

\subsection{Coverings and permutation voltage assignments}

The mapping $\pi:\Gamma^{\alpha}\to \Gamma$ that is defined by erasing the second coordinate, that is, $\pi(u,i)=u$ and $\pi(a,i)=a$, for every $u\in V$, $a\in X$ and $i\in [n]$, is a {\em covering}, in its usual meaning in algebraic topology. Since $\pi$ consists of $n$-to-$1$ mappings $V^{\alpha}\to V$ and $X^{\alpha}\to X$, we speak about an {\em $n$-fold covering}; we note that this covering may not be regular, see, for instance, Gross and Tucker \cite{gt77}. The graph $\Gamma$ is often called the {\em base graph} of the covering.

Conversely, given an arbitrary $n$-fold covering $\vartheta$ (regular or not) of our base graph $\Gamma$ by some graph $\tilde\Gamma$ with vertex set $\tilde V$ and arc set $\tilde X$, then $\vartheta$ is equivalent to a covering described as above. In more detail, being a covering means that $\vartheta:\tilde\Gamma \to\Gamma$ is assumed to induce $n$-fold mappings $\tilde V\to V$ and $\tilde X\to X$ with the property that for every vertex $u\in V$ and for every vertex $\tilde u\in \vartheta^{-1}(u)\subset \tilde V$ the set of arcs of $\tilde\Gamma$  emanating from $\tilde u$ are mapped bijectively by the {\em same} mapping $\vartheta$ onto the set of arcs of $\Gamma$ emanating from $u$. Then, there is a permutation voltage assignment $\alpha: X\to {\rm Sym}(n)$ inducing a covering $\pi: \Gamma^{\alpha}\to \Gamma$ as above, and a graph isomorphism $\omega: \tilde\Gamma\to \Gamma^{\alpha}$, such that $\vartheta=\omega\pi$, as shown in the following commutative diagram.
\smallskip
\smallskip
\[
\Large{
\xymatrix{
\Gamma^{\alpha}\ar[d]_{\pi} & \ar[l]_{\omega} \ar[ld]^{\vartheta}     \tilde{\Gamma}  \\
\Gamma  &
} }
\]

\subsection{Relative voltage assignments}

Permutation voltage assignments and the corresponding lifts and covers can equivalently be described in the language of the so-called relative voltage assignments as follows. Let $\Gamma$ be the graph considered above, $K$ a group and $L$ a subgroup of $K$ of index $n$; we let $K/L$ denote the set of right cosets of $L$ in $K$. Furthermore, let $\beta: X\to K$ be a mapping satisfying $\beta(a^-)= (\beta(a))^{-1}$ for every arc $a\in X$; in this context, one calls $\beta$ a {\em voltage assignment in $K$ relative to $L$}, or simply a relative voltage assignment. The {\em relative lift} $\Gamma^{\beta}$ has vertex set $V^{\beta}= V\times K/L$ and arc set $X^{\beta}=X\times K/L$. Incidence in the lift is given as expected: If $a$ is an arc from a vertex $u$ to a vertex $v$ in $\Gamma$, then for every right coset $J\in K/L$ there is an arc $(a,J)$ from the vertex $(u,J)$ to the vertex $(v,J\beta(a))$ in $\Gamma^{\beta}$. We slightly abuse the notation and denote by the same symbol $\pi$ the covering $\Gamma^{\beta}\to \Gamma$ given by suppressing the second coordinate.
\smallskip

%%%%%%%%%%%%%%%%%%%%%%%%%%%OLD%%%%%%%%%%%%%%%%%%%%%%%%%%%%%%%%%%%%%%%%%%%%%%%%%%%%
%There is a 1-to-1 correspondence between permutation and relative voltage assignments on a connected base graph in the case of {\em transitive} permutation groups, that is, in the case of a connected lift. Namely, if $\alpha$ is a permutation voltage assignment as in the first paragraph, taking place in a transitive permutation group $G\le {\rm Sym}(n)$, then the corresponding relative voltage group is $K=G$, with a subgroup $L$ being the stabilizer in $K$ of an arbitrary point from the set $[n]$. The corresponding relative assignment $\beta$ is simply identical to $\alpha$, but acting by right multiplication on $K/L$. Conversely, if $\beta$ is a voltage assignment in a group $K$ relative to a subgroup $L$ of index $n$ in $K$, then one may identify the set $K/L$ with $[n]$ in an arbitrary way, and then $\alpha(a)$ for $a\in X$ is the permutation of $[n]$ induced (in this identification) by right multiplication on the set of (right) cosets $K/L$ by $\beta(a)\in K$. By connectivity of the lift, the permutation group $G = \langle \alpha(a):a\in X\rangle$ is then a transitive subgroup of ${\rm Sym}(n)$.
%\smallskip
%%%%%%%%%%%%%%%%%%%%%%%%%%%%%%%%%%%%%%%%%%%%%%%%%%%%%%%%%%%%%%%%%%%%%%%%%%%%%%%%%%%

There is a 1-to-1 correspondence between permutation and relative voltage assignments on a connected base graph. Namely, if $\alpha$ is a permutation voltage assignment as in the first paragraph,
%taking place in a transitive permutation group $G\le {\rm Sym}(n)$,
then the corresponding relative voltage group is $K=G$, with a subgroup $L$ being the stabilizer in $K$ of an arbitrary point from the set $[n]$. The corresponding relative assignment $\beta$ is simply identical to $\alpha$, but acting by right multiplication on $K/L$. Conversely, if $\beta$ is a voltage assignment in a group $K$ relative to a subgroup $L$ of index $n$ in $K$, then one may identify the set $K/L$ with $[n]$ in an arbitrary way, and then $\alpha(a)$ for $a\in X$ is the permutation of $[n]$ induced (in this identification) by right multiplication on the set of (right) cosets $K/L$ by $\beta(a)\in K$.
%By connectivity of the lift, the permutation group $G = \langle \alpha(a):a\in X\rangle$ is then a transitive subgroup of ${\rm Sym}(n)$.
\smallskip

We also note that a covering $\Gamma^{\beta}\to \Gamma$, described in terms of a permutation voltage assignment, is regular if and only if the group $G = \langle \alpha(a):a\in X\rangle$
is a regular permutation group on $[n]$. If the covering is given by a voltage assignment in a group $K$ relative to a subgroup $L$, then it is equivalent to a regular covering if and only if $L$ is a normal subgroup of $K$. In such a case, the covering admits a description in terms of ordinary voltage assignment in the factor group $K/L$ and with voltage $L\beta(a)$ assigned to an arc $a\in X$ with original relative voltage $\beta(a)$. For more details about lifts and coverings, see Gross and Tucker \cite{gt87}.
\smallskip

At this point, it may be instructive to point out a relationship between ordinary and relative voltage assignments and the corresponding lifts. If $\beta$ is a voltage assignment on our graph $\Gamma$ in a group $K$ relative to a subgroup $L$ of index $n$, then the same $\beta$ can also be considered to be an {\em ordinary} voltage assignment in the group $K$ (with no reference to $L$ or, equivalently, letting $L$ be the trivial group). The {\em ordinary lift} of $\Gamma$, which we distinguish by the notation $\Gamma_0^\beta$, has vertex set $V\times K$ and arc set $X\times K$, and for every arc $a\in X$ from $u$ to $v$ and for every $g\in K$ there is an arc $(a,g)$ from the vertex $(u,g)$ to the vertex $(v,g\alpha(a))$ in $\Gamma_0^\beta$. Observe that this gives rise to two coverings: The covering $\Gamma_0^{\beta}\to \Gamma^\beta$ given on vertices and arcs of $\Gamma_0^{\beta}$ by $(u,g)\mapsto (u,Lg)$ and $(a,g)\mapsto (e,Lg)$, followed by the covering $\pi: \Gamma^\beta \to\Gamma$ considered before and given by erasing the second coordinate. Their composition is a regular covering $\Gamma_0^\beta\to\Gamma$ given, again, by suppressing the group coordinate.
\smallskip

\subsection{An example}
The following example is drawn from Gross and Tucker \cite{gt77}.

\begin{example} 
\label{E:1}
{\rm Let $\Gamma$ be the dumbbell graph with vertex set $V=\{u,v\}$ and arc set $X=\{a,a^-,b,b^-, c,c^-\}$, where the pairs $\{a,a^-\}$, $\{b,b^-\}$ and $\{c,c^-\}$ correspond, respectively, to a loop at $u$, an edge joining $u$ to $v$, and a loop at $v$. Let $n=3$ and let $\alpha$ be a voltage assignment on $\Gamma$ with values in ${\rm Sym}(3)$ given by $\alpha(a)=(23)$, $\alpha(b)=e$ (the identity element), and $\alpha(c)=(12)$, so that the voltage group $G$ is equal to $\langle (12),(23)\rangle = {\rm Sym}(3)$.
\smallskip

%\begin{figure}[h]
%\begin{center}
%      \scalebox{0.65}
%       {\includegraphics{Picture1b.pdf}}
%       \vskip -2mm
%       \caption{The dumbbell graph $\Gamma$ and its relative lift $\Gamma^{\alpha}$.
%       	\textcolor{blue}{Write $\alpha(a), \alpha(b), \alpha(c)$ instead of $\alpha(a), \alpha(a), \alpha(a)$} }
%       \label{fig1}
%\end{center}
%\end{figure}
\begin{figure}[t]
	\begin{center}
		\scalebox{0.68}
		{\includegraphics{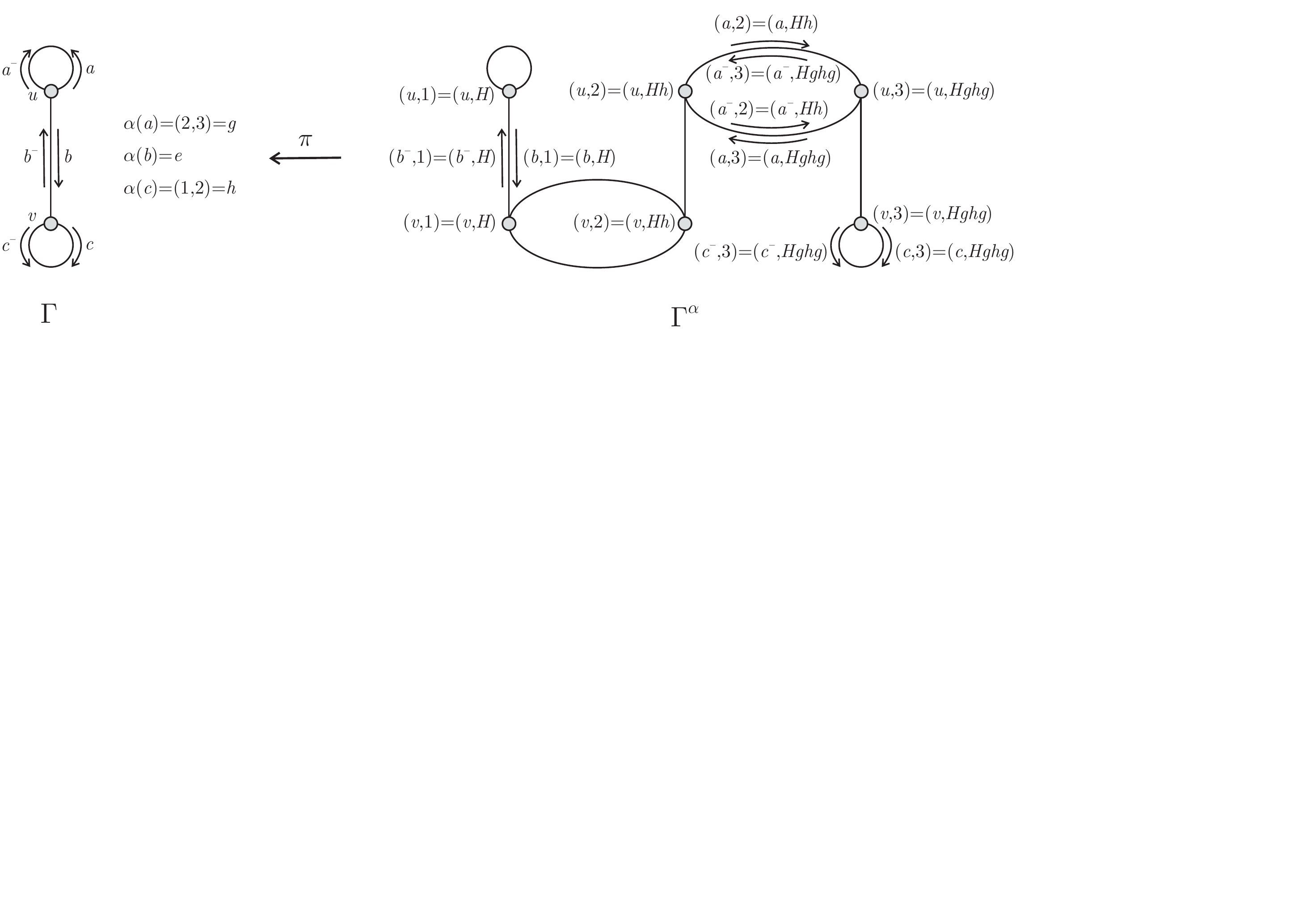}}
		\vskip -9cm
		\caption{The dumbbell graph $\Gamma$ and its relative lift $\Gamma^{\alpha}$. }
		\label{fig1}
	\end{center}
\end{figure}

%Since $G$ is transitive and $\Gamma$ is connected,
We may describe the situation in terms of a relative voltage assignment. To do so, we let $K$ be equal to $G$, and for $L$ we take the subgroup $H={\rm Stab}_G(1)= \{e,(23)\}$ of $G$. Furthermore, let the set $[3]=\{1,2,3\}$ be identified with $G/H$ by $1\mapsto H$, $2\mapsto H(12)$, and $3\mapsto H(13)$, and let $\alpha$ be as above but acting by a right multiplication on the right cosets of $H$ in $G$. Then, for example, in the lift $\Gamma^{\alpha}$ the arc $(a,2)$ emanates from the vertex $(u,2)$ and terminates at the vertex $(u,2\alpha(a))=(u,3)$ in the permutation voltage setting. Equivalently, in the relative voltage setting, with $2$ and $3$ identified with $H(12)$ and $H(13)$, the corresponding arc $(a,H(12))$ starting at the vertex $(u,H(12))$ points at the vertex $(u,H(12)\alpha(a)) =(u,H(13))$ because $H(12)\alpha(a) = H(12)(23) = \{(13),(132)\} = H(13)$. The covering $\pi: \Gamma^{\alpha}\to \Gamma$ is irregular since $H$ is not a normal subgroup of $G$; the situation is displayed in Figure 1.}
\end{example}

\section{Some results on group representations}\label{sec:basic}

In this section, we recall some basic results on representation theory that will be used in our study; see, for instance, the textbooks of Burrow~\cite{b93}, and James and Liebeck~\cite{JaLi}. With the same purpose, we also give a new result on group representations, which may be of interest on its own.
\smallskip

We begin by stating a result well known as the Great Orthogonality Theorem. For a complex representation $\rho$ of a group $G$, we let $d_\rho=\dim(\rho)$ denote the dimension of $\rho$; as usual, $\rho(g)_{i,j}$ will be the $(i,j)$-th entry of the matrix $\rho(g)$ for $g\in G$. We denote by $z^*$ the complex conjugate of a complex number $z$. Furthermore, let $\Irep(G)$ denote a complete set of unitary irreducible representations of $G$. Let us also recall the Kronecker symbol $\delta_{s,t}$, for $s,t$ in any suitable domain, equal to $0$ if $s\ne t$ and to $1$ if $s=t$.

\begin{theorem}[The Great Orthogonality Theorem]
Let $G$ be a finite group. Then, for any $\rho,\rho'\in \Irep(G)$,
\[  \sum_{g\in G} \rho(g)_{i,j}\rho'(g)^*_{i',j'} = \frac{|G|}{d_\rho}\delta_{i,i'}\delta_{j,j'}\delta_{\rho, \rho'} \ .\]
\end{theorem}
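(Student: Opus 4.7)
The plan is to derive the identity from Schur's lemma in the standard way, using an averaging trick. For any rectangular matrix $M$ of size $d_\rho \times d_{\rho'}$, define
\[
T \;=\; \sum_{g\in G} \rho(g)\, M\, \rho'(g)^{-1}.
\]
A straightforward change of variables (replacing $g$ by $h^{-1}g$ in the sum) shows that $\rho(h)\, T = T\, \rho'(h)$ for every $h\in G$; that is, $T$ is an intertwiner between $\rho$ and $\rho'$. By Schur's lemma, $T=0$ whenever $\rho\ne \rho'$ (as inequivalent irreducibles admit no nonzero intertwiner), while for $\rho=\rho'$ we have $T=\lambda I_{d_\rho}$ for some scalar $\lambda$. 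Taking traces in the latter case gives
\[
\lambda\, d_\rho \;=\; \tr(T) \;=\; \sum_{g\in G}\tr\bigl(\rho(g)M\rho(g)^{-1}\bigr) \;=\; |G|\tr(M),
\]
so $\lambda = |G|\tr(M)/d_\rho$.

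Next I would specialize $M$ to the elementary matrix $E_{j,j'}$, which has a $1$ in position $(j,j')$ and zeros elsewhere, and read off the $(i,i')$-entry of $T$. Since the representations are unitary, $\rho'(g)^{-1} = \rho'(g)^{\dagger}$, so $\rho'(g)^{-1}_{j',i'} = \overline{\rho'(g)_{i',j'}} = \rho'(g)^{*}_{i',j'}$. Therefore
\[
T_{i,i'} \;=\; \sum_{g\in G} \rho(g)_{i,j}\, \rho'(g)^{*}_{i',j'}.
\]
When $\rho\ne \rho'$ this is $0$, yielding the factor $\delta_{\rho,\rho'}$. When $\rho=\rho'$, we have $\tr(E_{j,j'})=\delta_{j,j'}$, so $T=(|G|/d_\rho)\delta_{j,j'} I$, and hence $T_{i,i'} = (|G|/d_\rho)\delta_{i,i'}\delta_{j,j'}$. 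Combining both cases yields the claimed identity.

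The main obstacle is simply the careful bookkeeping of which index is conjugated and where the transpose lands when translating $\rho'(g)^{-1}$ into $\rho'(g)^{*}$ via unitarity; the conceptual content is entirely contained in the two cases of Schur's lemma, and the trace computation is routine.
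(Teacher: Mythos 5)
Your argument is correct: the averaging construction, the two cases of Schur's lemma, the trace computation fixing the scalar, and the use of unitarity to convert $\rho'(g)^{-1}_{j',i'}$ into $\rho'(g)^{*}_{i',j'}$ are all carried out accurately, and the index bookkeeping matches the stated formula. Note, however, that the paper does not prove this statement at all; it quotes the Great Orthogonality Theorem as a known result from the representation-theory literature (Burrow, James--Liebeck) and uses it as a black box, so there is no in-paper argument to compare against. What you have written is the standard textbook proof, and the only hypotheses worth making explicit are that the ground field is $\C$ (so that Schur's lemma yields $T=\lambda I$ in the equivalent case) and that distinct elements of $\Irep(G)$ are inequivalent, which is built into the definition of a complete set of irreducible representations.
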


The second result we will need later is also well known.

\begin{theorem}[Orthogonality Theorem for Characters]
	\label{th:ortocharac}
	Let $G$ be a finite group and for $g\in G$ let $c_g$ denote the size of the conjugacy class of $g$ in the group $G$. Furthermore, let $\chi^\rho$ be the character of an irreducible representation $\rho\in\Irep(G)$.
\begin{itemize}
\item[$(i)$]
For any $\rho,\rho'\in \Irep(G)$,
		$$
		\sum_{g\in G}\chi^\rho(g)\chi^{\rho'}(g)^*=|G|\delta_{\rho,\rho'}.
		$$
\item[$(ii)$]
For any $g,g'\in G$,
		$$
		\sum_{\rho\, \in\, \Irep(G)}\chi^\rho(g)\chi^\rho(g')^*=\frac{|G|}{c_{g}}\delta_{g,g'}
		$$
		where $\delta_{g,g'}=1$ if $g$ and $g'$ belong to the same conjugacy class of $G$, and $\delta_{g,g'}=0$ otherwise.
	\end{itemize}
\end{theorem}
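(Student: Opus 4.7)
The plan is to derive part $(i)$ directly from the Great Orthogonality Theorem by expressing characters as traces, and then to deduce part $(ii)$ from $(i)$ via a square-matrix unitarity argument applied to the character table.

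For $(i)$, I would write $\chi^\rho(g)=\sum_{i=1}^{d_\rho}\rho(g)_{i,i}$ and $\chi^{\rho'}(g)^{*}=\sum_{j=1}^{d_{\rho'}}\rho'(g)^{*}_{j,j}$, interchange summations, and invoke the Great Orthogonality Theorem on the inner sum. This gives
$$
\sum_{g\in G}\chi^\rho(g)\chi^{\rho'}(g)^{*}=\sum_{i,j}\sum_{g\in G}\rho(g)_{i,i}\rho'(g)^{*}_{j,j}=\sum_{i,j}\frac{|G|}{d_\rho}\,\delta_{i,j}\delta_{i,j}\delta_{\rho,\rho'}.
$$
If $\rho\neq\rho'$ the right-hand side vanishes, while if $\rho=\rho'$ only the diagonal terms $i=j$ survive, contributing $d_\rho\cdot |G|/d_\rho=|G|$. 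This yields the stated identity.

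For $(ii)$, let $C_1,\dots ,C_k$ be the conjugacy classes of $G$, with sizes $c_1,\dots ,c_k$ and representatives $g_1,\dots ,g_k$, and let $r=|\Irep(G)|$. I would introduce the rescaled character-table matrix $M$ of size $r\times k$ with entries
$$
M_{\rho,j}=\sqrt{\tfrac{c_j}{|G|}}\,\chi^\rho(g_j).
$$
Since $\chi^\rho$ is a class function, part $(i)$ rewrites as $MM^{*}=I_r$, so the rows of $M$ are orthonormal. The main obstacle is to show that $M$ is \emph{square}, i.e.\ $r=k$. Once this is known, $M$ is unitary, so $M^{*}M=I_k$, and writing this identity out entrywise and multiplying through by $|G|/\sqrt{c_ic_j}$ gives exactly the claim of $(ii)$ (noting that $\chi^\rho$ is constant on conjugacy classes, so the case $g,g'$ in the same class reduces to the diagonal $i=j$, and conjugating either factor is immaterial there).

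The equality $r=k$ is the substantive step and requires going beyond the Great Orthogonality Theorem: one must show that the irreducible characters form a basis of the space of complex class functions on $G$, whose dimension is $k$. Linear independence is immediate from $(i)$, so the real work is spanning; I would do this in the standard way by considering the centre of the group algebra $\mathbb{C}[G]$, applying Schur's lemma to the central class-sum elements under each irreducible representation to show that every class function is a linear combination of characters. With $r=k$ in hand, the unitarity argument above delivers $(ii)$.
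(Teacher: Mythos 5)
The paper does not prove this theorem at all: it is stated as a ``well known'' result, with the standard references (Burrow; James and Liebeck) given in Section 3, so there is no in-paper argument to compare yours against. Judged on its own merits, your proof is correct and is the standard textbook derivation. Part $(i)$ follows exactly as you say by taking traces in the Great Orthogonality Theorem, and part $(ii)$ follows from the unitarity of the rescaled character table once one knows that the number of irreducible characters equals the number of conjugacy classes. You have correctly isolated that equality as the one substantive input not deducible from the Great Orthogonality Theorem alone, and your sketch of it (linear independence from $(i)$, spanning via Schur's lemma applied to the class sums in the centre of $\mathbb{C}[G]$) is the standard and correct route. One cosmetic remark: the identity $M^{*}M=I_k$ literally yields $\sum_{\rho\in\Irep(G)}\chi^\rho(g)^{*}\chi^\rho(g')=\frac{|G|}{c_g}\delta_{g,g'}$, with the conjugation on the first factor rather than the second; as you note, this is immaterial because both sides are real (the diagonal case gives $\sum_\rho|\chi^\rho(g)|^2$ and the off-diagonal case gives zero), so the stated form of $(ii)$ follows.
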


In the proof of our main result in Section \ref{sec:main-result}, we will also need the following proposition, which we could not locate in the literature and which could be of interest on its own. For a subgroup $H$ of a finite group $G$ and for any $\rho\in \Irep(G)$, we let $\rho(H)=\sum_{h\in H}\rho(h)$; that is, $\rho(H)$ is the sum of $d_\rho$-dimensional complex matrices $\rho(h)$ taken over all elements $h$ of the subgroup $H$. We note that, in general, it may happen that $\rho(H)$ is the zero matrix, although, of course, all the matrices $\rho(h)$ for $h\in H$ are non-singular. As usual, the symbol $\rank(M)$ stands for the rank of a matrix $M$.

\begin{proposition}\label{p:id}
For every group $G$ and every subgroup $H$ of $G$ of index $n$,
\begin{equation}\label{eq:id}
\sum_{\rho\, \in\, \Irep(G)} \dim(\rho)\cdot \rank(\rho(H)) = n\ .
\end{equation}
\end{proposition}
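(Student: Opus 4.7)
The plan is to show that $\rho(H)/|H|$ is a projection matrix, compute its rank via its trace, and then sum over the irreducible representations using the second orthogonality relation for characters (Theorem 3.2(ii)).

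First I would establish that $P_\rho := \rho(H)/|H|$ is idempotent. For any $h_0 \in H$, since left multiplication by $h_0$ permutes $H$, we have $\rho(h_0)\rho(H) = \sum_{h \in H} \rho(h_0 h) = \rho(H)$. Summing this identity over $h_0 \in H$ and dividing by $|H|$ gives $P_\rho^2 = P_\rho$. Because a projection has eigenvalues in $\{0,1\}$, its rank coincides with its trace, so
\[
\rank(\rho(H)) = \rank(P_\rho) = \tr(P_\rho) = \frac{1}{|H|}\sum_{h \in H} \chi^{\rho}(h) .
\]

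Next I would substitute this into the left-hand side of \eqref{eq:id}, use $\dim(\rho) = \chi^{\rho}(e)$, and interchange the order of summation:
\[
\sum_{\rho \,\in\, \Irep(G)} \dim(\rho)\cdot\rank(\rho(H))
= \frac{1}{|H|}\sum_{h\in H}\sum_{\rho\,\in\,\Irep(G)} \chi^{\rho}(e)\,\chi^{\rho}(h) .
\]
Since $\chi^{\rho}(e) = d_\rho$ is a positive integer, we have $\chi^{\rho}(e) = \chi^{\rho}(e)^*$, so the inner sum is exactly the one appearing in Theorem 3.2(ii) with $g = h$ and $g' = e$. The identity $e$ is the unique element of its conjugacy class, so $c_e = 1$ and the inner sum equals $|G|$ if $h = e$ and $0$ otherwise.

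Hence only the term $h = e$ survives, and the expression collapses to $|G|/|H| = n$, giving \eqref{eq:id}. The argument is short and the potentially tricky point is the first step, namely recognising that $\rho(H)$ may well be the zero matrix but that $\rho(H)/|H|$ is nevertheless always a (possibly zero) projection whose rank equals its trace; once that is in place, everything else is a direct application of the orthogonality relation already recorded in the paper.
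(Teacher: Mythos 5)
Your proof is correct, and it reaches the goal by a genuinely different route than the paper. Both arguments hinge on the same intermediate identity $\rank(\rho(H))=\frac{1}{|H|}\sum_{h\in H}\chi^{\rho}(h)$, but you obtain it by observing that $\rho(H)/|H|$ is idempotent (hence diagonalizable with eigenvalues in $\{0,1\}$, so rank equals trace), whereas the paper obtains it by decomposing the restriction $\rho|H$ into irreducible representations of $H$ and identifying the rank with the multiplicity of the trivial constituent, the nonzero contributions of the nontrivial constituents being killed by orthogonality. For the final summation over $\rho\in\Irep(G)$, you interchange the order of summation and apply the second (column) orthogonality relation, Theorem \ref{th:ortocharac}$(ii)$, with $g=h$ and $g'=e$, so that only $h=e$ survives and the sum collapses to $|G|/|H|=n$; the paper instead invokes an identity from James and Liebeck involving $\sum_{\rho}\chi^{\rho}(e)\langle\chi^{\rho|H},\psi\rangle$, which is essentially Frobenius reciprocity applied to the trivial character. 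Your version has the advantage of being entirely self-contained relative to the results already stated in Section \ref{sec:basic} (plus the elementary fact that a complex idempotent has rank equal to trace), and it avoids any discussion of restricted or induced representations; the paper's version makes the representation-theoretic meaning of $\rank(\rho(H))$ (the multiplicity of the trivial representation in $\rho|H$) explicit, which is conceptually useful elsewhere in the argument. One cosmetic remark: in your idempotency computation, summing $\rho(h_0)\rho(H)=\rho(H)$ over $h_0\in H$ gives $\rho(H)^2=|H|\,\rho(H)$, so the normalization is by $|H|^2$ rather than $|H|$; the conclusion $P_\rho^2=P_\rho$ is of course unaffected.
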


\begin{proof}
We will freely use a few facts about irreducible representations and characters of groups, taken from James and Liebeck~\cite{JaLi}. Let $\rho|H$ be the representation of the subgroup $H$ of $G$ induced by $\rho$. If $\sigma$ is any non-trivial irreducible constituent of the decomposition of $\rho|H$ into an irreducible representation of $H$, then, by the elementary representation theory (a particular case of the Great Orthogonality Theorem), $\sum_{h\, \in\,  H} \sigma(h)=0$. On the other hand, for a trivial representation $\tau$ of $H$, we have $\sum_{h\, \in\,  H}\tau(h)=|H|\ne 0$. It follows that $\rank(\sum_{h\, \in\, H}\rho(h)) = s$ if and only if the trivial representation appears exactly $s$ times in the decomposition of $\rho|H$ into irreducible constituents.
Therefore, if the character $\chi^{\rho|H}$ decomposes into irreducible characters of $H$ in the form
\begin{equation*}
\chi^{\rho|H} = \sum_{\sigma\, \in\, \Irep(H)} a_{\sigma} \chi^{\sigma}\ ,
\end{equation*}
then, by the above argument, $s=a_\iota$, where $\iota=\iota_H$ denotes here the trivial representation of $H$. Moreover, by the first form of the Orthogonality Theorem for Characters,
one has
\begin{equation}
\label{eq:10} s = a_\iota = \frac{1}{|H|} \sum_{h\, \in\, H} \chi^{\rho|H}(h)\chi^\iota(h)^* = \frac{1}{|H|} \sum_{h\, \in\, H} \chi^{\rho|H}(h) = \langle \chi^{\rho|H}, \chi^{\iota} \rangle,
\end{equation}
where $\langle \cdot,\cdot\rangle$ denotes the standard inner product of characters relative to $H$. We now invoke the last identity of the proof of Proposition 20.4 on page 213 of \cite{JaLi}, by which
\begin{equation}
\label{eq:20}
\sum_{\rho\, \in\, \Irep(G)} \chi^\rho(e) \langle \chi^{\rho|H},\psi \rangle = \frac{|G|}{|H|}\psi(e)
\end{equation}
for any character $\psi$ of $G$; here $e$ is the unit element of $G$. Letting $\psi$ be the trivial character, using $n=|G|/|H|$, and realizing that $\chi^\rho(e)=\dim(\rho)=d_\rho$, the equation (\ref{eq:20}) reduces to
\begin{equation}
\label{eq:30}
\sum_{\rho\, \in\, \Irep(G)} d_\rho\cdot \langle \chi^{\rho|H},\chi^{\iota} \rangle = n.
\end{equation}
Finally, by (\ref{eq:10}) we know that $\rank(\rho(H))= \rank(\sum_{h\, \in\, H}\rho(h)) = s=\langle \chi^{\rho|H},\chi^{\iota} \rangle$, which in combination with (\ref{eq:30}) gives the equation (\ref{eq:id}), as expected.
\end{proof}

Note that Proposition \ref{p:id} generalizes the well-known identity $\sum_{\rho\, \in\, \Irep(G)} (d_\rho)^2=|G|$ by letting $H=1$, with $\sum_{h\in H}\rho(h)$ reducing to the identity matrix of rank equal to $d_\rho=\dim(\rho)$. In the proof, this shows up by the fact that $\chi^{\rho|H}$ is equal to $d_\rho$ if $H$ is trivial.

\section{The spectrum of a relative lift}
\label{sec:main-result}

Let $\Gamma$ be a connected graph on $k$ vertices (with loops and multiple edges allowed), and let $\alpha$
be a permutation voltage assignment on the arc set $X$ of $\Gamma$ in a {\em transitive} permutation group $G$
of degree $n$. Letting, without loss of generality, $H={\rm Stab}_G(1)$, we will freely use the fact that
$\alpha$ is equivalent to the assignment in $G$ relative to $H$ given by the same mapping $\alpha$,
but it is considered to act on right cosets of $G/H$ by right multiplication.
\smallskip

To the pair $(\Gamma,\alpha)$ as above, we assign the $k\times k$ {\em base matrix} $B$, a square matrix whose rows and columns are indexed by elements of the vertex set of $\Gamma$, and whose $uv$-th element $B_{uv}$ is determined as follows: If $a_1,\ldots,a_j$ is the set of all the arcs of $\Gamma$ emanating from $u$ and terminating at $v$ (not excluding the case $u=v$), then
\begin{equation}\label{eq:Balpha} B_{u,v}=\alpha(a_1)+\cdots + \alpha(a_j)\ ,\end{equation}
the sum being an element of the complex group algebra $\C(G)$; otherwise, we let $B_{u,v}=0$.
\smallskip

As before, let $\rho \in \Irep(G)$ be a unitary irreducible representation of $G$ of dimension $d_\rho$. For our graph $\Gamma$ on $k$ vertices, the assignment $\alpha$ in $G$ relative to $H$, and the base matrix $B$, we let $\rho(B)$ be the $d_\rho k\times d_\rho k$ matrix obtained from $B$ by replacing every non-zero entry $B_{u,v} = \alpha(a_1) +\cdots + \alpha(a_j) \in \C(G)$ as in \eqref{eq:Balpha} by the $d_\rho\times d_\rho$ matrix $\rho(B_{u,v})$ defined by
\begin{equation}\label{eq:B}
\rho(B_{u,v}) =  \rho(\alpha(a_1)) + \cdots + \rho(\alpha(a_j)),
\end{equation}
and by replacing zero entries of $B$ by all-zero $d_\rho\times d_\rho$ matrices. We will refer to $\rho(B)$ as the {\em $\rho$-image} of the base matrix $B$.
\smallskip

An important role in our consideration will be played by the matrix $\rho(H)= \sum_{h\in H}\rho(h)$, that   we have encountered in Proposition \ref{p:id}; let $r_{\rho,H}=\rank(\rho(H))$. With this notation, we return to the $\rho$-image of the base matrix, and we let $\Sp(\rho(B))$ denote the spectrum of $\rho(B)$, that is, the multiset of all the $d_\rho k$ eigenvalues of the matrix $\rho(B)$. Finally, let $r_{\rho,H}\cdot \Sp(\rho(B))$ be the multiset of $d_\rho kr_{\rho,H}$ values obtained by taking each of the $d_\rho k$ entries of the spectrum $\Sp(\rho(B))$ exactly $r_{\rho,H}=\rank(\rho(H))$ times. This, in particular, means that if $r_{\rho,H}=\rank(\rho(H))=0$, the set $r_{\rho,H}\cdot \Sp(\rho(B))$ is empty.
\smallskip

With this terminology and notation, we are ready to state and prove our main result.

\begin{theorem}
\label{t:spec}
Let $\Gamma$ be a base graph of order $k$ and let $\alpha$ be a voltage assignment on $\Gamma$ in a group $G$ relative to a subgroup $H$ of index $n$ in $G$. Then, the multiset of the $kn$ eigenvalues of the relative lift $\Gamma^{\alpha}$ is obtained by concatenation of the multisets $\rank(\rho(H))\cdot  \Sp(\rho(B))$, ranging over all the irreducible representations $\rho\in \Irep(G)$.
\end{theorem}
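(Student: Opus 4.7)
The plan is to construct, for each irreducible representation $\rho \in \Irep(G)$, explicit eigenvectors of the adjacency matrix $\tilde A$ of $\Gamma^{\alpha}$ out of eigenvectors of the image matrix $\rho(B)$, and then invoke Proposition \ref{p:id} to see that these already account for the full spectrum. The guiding observation is that, viewed as a $k\times k$ block matrix indexed by $V\times V$, the adjacency matrix $\tilde A$ has $(u,v)$-block equal to $\sum_{a:\,u\to v} P(\alpha(a))$, where $P$ is the permutation representation of $G$ on the right cosets $G/H$. Thus $\tilde A$ is built entrywise from the matrices $P(g)$ and can be analyzed via the decomposition of $P$ into irreducibles.

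First I would identify, for each $\rho$, the subspace $W_\rho = \{\mathbf v\in\C^{d_\rho} : \rho(h)\mathbf v=\mathbf v \text{ for all } h\in H\}$. A standard averaging argument shows that $|H|^{-1}\rho(H)$ is the orthogonal projection onto $W_\rho$, so in particular $\dim W_\rho = \rank(\rho(H))$. The point of $W_\rho$ is that for $\mathbf v\in W_\rho$ the expression $\mathbf v^{\,*}\rho(g)$ depends only on the coset $Hg$, which is precisely what makes it possible to push eigenvectors down from $V\times G$ to $V\times G/H$.

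The central construction is then as follows: for each $\rho\in\Irep(G)$, each $\mathbf v\in W_\rho$, and each eigenvector $\mathbf w=(\mathbf w_u)_{u\in V}$ of $\rho(B)$ with eigenvalue $\lambda$, define $f:V\times G/H \to \C$ by $f(u,Hg) = \mathbf v^{\,*}\rho(g)\mathbf w_u$. A one-line direct calculation, using the definitions of $\tilde A$ and of $\rho(B)$ and pulling $\mathbf v^{\,*}\rho(g)$ out in front, yields $\tilde A f = \lambda f$. Letting $\mathbf v$ range over a basis of $W_\rho$ and $\mathbf w$ over an eigenbasis of $\rho(B)$ produces $\rank(\rho(H))\cdot d_\rho k$ eigenvectors of $\tilde A$ whose eigenvalues exhaust the multiset $\rank(\rho(H))\cdot \Sp(\rho(B))$. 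Summing over $\rho\in\Irep(G)$ yields $\sum_\rho \rank(\rho(H))\cdot d_\rho k = kn$ vectors, matching the order of $\Gamma^{\alpha}$ by Proposition \ref{p:id}.

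The main obstacle is to verify that this global collection of $kn$ eigenvectors is linearly independent, since once this is established the count forces them to span $\C^{V\times G/H}$, and so the spectrum of $\tilde A$ must coincide with the concatenation of the multisets $\rank(\rho(H))\cdot \Sp(\rho(B))$. I would handle this by expanding each $f_{\rho,\mathbf v,\mathbf w}$ in the basis indexed by $V\times G/H$ and invoking the Great Orthogonality Theorem on the matrix coefficients $g\mapsto\rho(g)_{i,j}$: these are pairwise orthogonal across inequivalent $\rho$ and linearly independent within a fixed $\rho$. The reason only directions indexed by $W_\rho$ survive (giving multiplicity $\rank(\rho(H))$ rather than $d_\rho$) is exactly that we must first factor through the projection onto $W_\rho$ in order to descend from $\ell^2(G)$ to $\ell^2(G/H)$; this is what ties the representation-theoretic data back to Proposition \ref{p:id} and closes the argument.
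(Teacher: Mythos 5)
Your proposal is correct, and its skeleton matches the paper's: transfer eigenvectors of $\rho(B)$ up to the lift via matrix coefficients of $\rho$ averaged over $H$, count the resulting vectors with Proposition \ref{p:id}, and use the Great Orthogonality Theorem to get linear independence. Where you genuinely diverge is in how the subgroup $H$ enters. The paper works with the full matrices $\Phi_{(u,Hg)}=\rho(H)\rho(g){\bf x}_u$, first settles the regular case ($H$ trivial) by showing the $k|G|$ columns of a matrix $ST$ are pairwise orthogonal, and then descends to the general case by observing that $S^H$ arises from $S$ by summing the rows within each coset, so that $\rank(S^H)\ge \rank(S)-kn(|H|-1)=kn$. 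You instead parametrize from the outset by the $H$-fixed subspace $W_\rho=\{\mathbf v:\rho(h)\mathbf v=\mathbf v\}$, using that $|H|^{-1}\rho(H)$ is the orthogonal projection onto $W_\rho$ (so $\dim W_\rho=\rank(\rho(H))$, and $\mathbf v^*\rho(g)$ is constant on right cosets $Hg$ precisely when $\mathbf v\in W_\rho$ — note this uses unitarity of $\rho$). This produces exactly $kn$ candidate eigenvectors rather than $k|G|$ redundant ones, and replaces the row-summation rank bound by a direct independence argument: expanding $\mathbf v^*\rho(g)\mathbf w_u$ in the matrix coefficients $\rho(g)_{ij}$, a vanishing linear combination forces $\sum c_{\rho,\mathbf v,\mathbf w}\,\mathbf w\mathbf v^*=0$ for each $\rho$ (after stacking the blocks over $u\in V$), whence all coefficients vanish since the $\mathbf w$ form an eigenbasis and the $\mathbf v$ a basis of $W_\rho$ — a small step you should spell out. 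Your route is cleaner and makes the multiplicity $\rank(\rho(H))$ conceptually transparent; the paper's route has the minor advantage of exhibiting the regular-lift eigenvectors as an intermediate object, which it then reuses in its worked example.
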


\begin{proof}
Let $A^{\alpha}$ be the adjacency matrix of the relative lift $\Gamma^{\alpha}$ with vertex set $V^{\alpha}=V\times G/H$, where $V$ is the vertex set of the base graph $\Gamma$, and $G/H$ is the set of right cosets of $H$ in $G$. In more detail, $A^{\alpha}$ is a $kn\times kn$ matrix indexed by ordered pairs $(u,J)\in V^{\alpha}$ whose entries are given as follows. If there is no arc from $u$ to $u'$ for $u,u'\in V$ in $\Gamma$, then there is no arc from any vertex $(u,J)$ to any vertex $(u',J')$ for $J,J'\in G/H$ in the lift $\Gamma^{\alpha}$. In the case of existing adjacencies, if for some $g\in G$, there are exactly $t$ arcs $a_1,\ldots,a_t$ from some vertex $u$ to some vertex $v$ in $\Gamma$ that carry the voltage $g=\alpha(a_1)=\cdots =\alpha(a_t)$, then for every right coset $J\in G/H$ the $(u,J)(v,Jg)$-th entry of $A^{\alpha}$ is equal to $t$.

\smallskip
Our aim is to show that the multiset of all the eigenvalues of the matrix $A^{\alpha}$ is equal to a  concatenation of multisets coming from {\em very different} matrices. Namely, those of the form $\rho(B)$, where in addition each entry in their spectrum is taken $\rank(\rho(H))$ times, and with $\rho$ ranging over all the irreducible representations of $G$. To do so, we proceed as follows.

\smallskip
Take a representation $\rho\in \Irep(G)$ of dimension $d=d_{\rho}$ and consider the matrix $\rho(B)$, the $\rho$-image of the base matrix $B$ of $\Gamma$ relative to $H$. Let $D=D_\rho$ be a diagonal matrix formed by the multiset $\Sp(\rho(B))$ of $dk$ eigenvalues of $\rho(B)$. Furthermore, let $U=U_\rho$ be a $dk\times dk$ matrix whose columns are eigenvectors of $\rho(B)$ corresponding to the eigenvalues in the main diagonal of $D$, so that the three $dk\times dk$ matrices $\rho(B)$, $D$ and $U$ satisfy
\begin{equation}
\label{eq:UD} \rho(B) U = U D.
\end{equation}

Assume that the base matrix $B$ is indexed by the elements of the vertex set $V$ of $\Gamma$ in some fixed order, and that this indexing is lifted to the three matrices above, so that, say, for $u\in V$ the matrix $U$ is formed by $k$ strips of width $d$, indexed by $(u,i)$ for a particular $u\in V$, where $i$ ranges over the set $[d]=\{1,2,\ldots,d\}$. For any $u\in V$, let ${\bf x}_u$ be such a $d\times dk$ strip of $U$. The product ${\bf x}_uD$ is a $d\times dk$ strip taken from the right-hand side of (\ref{eq:UD}), consisting of a `row' of $d\times d$ square matrices of the form ${\bf x}_{u,w}D_w$, where, for each fixed vertex $w\in V$, the square block ${\bf x}_{u,w}$ corresponds to columns indexed by $(w,j)$ for $j\in [d]$, and $D_w$ is the square diagonal block of $D$ corresponding to $w$; one should keep in mind that these blocks depend on $\rho$ that we omit from our notation. \smallskip

Because of the equation (\ref{eq:UD}) the same square block ${\bf x}_{u,w}$ arises from the left-hand side of (\ref{eq:UD}) as the product of the horizontal $d\times dk$ strip of $\rho(B)$ corresponding to the vertex $u$ with the vertical $dk\times d$ strip $U_w$ of $U$ corresponding to the vertex $w$. By (\ref{eq:B}), the horizontal $d\times dk$ strip of $\rho(B)$ consists of square $d\times d$ blocks of the form $\rho(B_{u,v})$ with $v$ ranging over the set $V$. The vertical $dk\times d$ strip $U_w$ corresponding to $w$ consists of square $d\times d$ blocks ${\bf x}_{v,w}$ mentioned above, again with $v$ ranging over $V$. Using the symbol $v\sim u$ to capture {\em all} arcs from $u$ to $v$, it follows that the product of these two strips is a $d\times d$ square block of the form $\sum_{v\sim u} \rho(B_{u,v}){\bf x}_{v,w}$, and by the previous paragraph this square block is also equal to ${\bf x}_{u,w}D_w$. Since $w\in V$ was arbitrary, we conclude that, for every $u\in V$,
\begin{equation}
\label{eq:xD}
\sum_{v\, \sim\, u} \rho(B_{u,v}){\bf x}_{v} = {\bf x}_uD. \end{equation}
Next, for every vertex $(u,Hg)$ of the relative lift $\Gamma^{\alpha}$ consider the $d\times dk$ matrix \begin{equation}\label{eq:Phi} \Phi_{(u,Hg)} = \left(\sum_{z\, \in Hg}\rho(z)\right){\bf x}_u = \left(\sum_{h\, \in\, H}\rho(h)\right)\rho(g){\bf x}_u = \rho(H)\rho(g){\bf x}_u.\end{equation}
A straightforward calculation using (\ref{eq:xD}) gives
\[
\Phi_{(u,Hg)}D = \rho(H)\rho(g){\bf x}_uD = \rho(H)\rho(g)\sum_{v\, \sim\, u}\rho(B_{u,v}){\bf x}_{v}
= \sum_{v\, \sim\, u} \rho(H)\rho(g\alpha(u,v)){\bf x}_v,
\]
where we emphasize again that the summation symbol $v\sim u$ means summing, for every $v\in V$, over {\em all} arcs from $u$ to $v$, which is in accordance with $B_{u,v}$ expressed earlier as in (\ref{eq:Balpha}). But the last term in the chain of the equations above is, by (\ref{eq:Phi}), equal to $\sum_{v\sim u} \Phi_{(v,Hg\alpha(u,v))}$, so that we have derived \begin{equation}
\label{eq:PhiD}
\Phi_{(u,Hg)}D = \sum_{v\sim u} \Phi_{(v,Hg\alpha(u,v))}.
\end{equation}

To get insight into the equation (\ref{eq:PhiD}), recall that $D$ consists of square diagonal blocks $D_w$ for each vertex $w\in V$; let $\mu_{(w,i)}$ for $i\in [d]$ be the eigenvalues of the matrix $\rho(B)$ appearing as diagonal entries of $D_w$. (We point out again that the eigenvalues depend on the representation $\rho$ we have been working with, so that one should write $D_w=D_{\rho,w}$ and $\mu_{(w,i)}=\mu_{(w,i)}(\rho)$, but we chose to keep things simple if no confusion is likely.) For any $v\in V$, let $\Phi_{(v,Hg),w}$ be the square $d\times d$ block of $\Phi_{(v,Hg)}$ corresponding to a vertex $w\in V$. The equation (\ref{eq:PhiD}) now states that, for any fixed $i\in [d]$ and any fixed vertex $w\in V$, the $i$-th column of the square block $\Phi_{(u,Hg),w}$ multiplied by $\mu_{(w,i)}$ is equal to the sum of the $i$-th columns of the square blocks $\Phi_{(v,Hg\alpha(u,v)),w}$, the summation ranging over all $v\sim u$.
\smallskip

The conclusion of the last paragraph means that for every pair of fixed elements $j\in [d]$ and $(w,i)\in V\times [d]$, the $kn$-dimensional vector ${\bf y}^H={\bf y}^H_{j,(w,i)}(\rho)$, obtained by taking the $(j,(w,i))$-th entry of every matrix $\Phi_{(v,Hh)}$ for the $kn$ vertices $(v,Hh)$ of $\Gamma^{\alpha}$, has the property that for every $u\in V$ the $\mu_{(w,i)}$-multiple of the $(u,Hg)$-th coordinate of ${\bf y}^H$ is equal to the sum of $(v,Hg\alpha (u,v))$-th coordinates of ${\bf y}$ over all the adjacencies $v\sim u$. In other words, ${\bf y}^H$ can be identified with a function on the vertex set $V^{\alpha}$ of the relative lift $\Gamma^{\alpha}$ with the property that, for any vertex $(u,J)\in V^{\alpha}$, the sum of the values of ${\bf y}^H$ taken over all the adjacencies $v\sim u$ is the $\mu_{(w,i)}$-multiple of the value of ${\bf y}^H$ at $(u,J)$. But, in the case of a non-zero vector, this is the defining property of an eigenvector of the adjacency matrix $A^{\alpha}$ of our lift. Thus, if ${\bf y}^H={\bf y}^H_{j,(w,i)}(\rho)\ne {\bf 0}$, then it is an eigenvector of the relative lift $\Gamma^{\alpha}$ corresponding to the eigenvalue $\mu_{(w,i)}=\mu_{(w,i)} (\rho)$ in the spectrum of the matrix $\rho(B)$.
\smallskip

The question now is if, in this way, one can determine bases for all eigenspaces of the relative lift. Their expected sum of dimensions is $kn$, the number of vertices of $\Gamma^{\alpha}$. As on both sides of (\ref{eq:PhiD}), the matrix $\rho(H)$ appears as a factor, one cannot produce more than $r_{\rho,H}= \rank(\rho(H))$ linearly independent vectors ${\bf y}={\bf y}_{j,(w,i)}(\rho)$ as described above for the eigenvalue $\mu_{(w,i)}(\rho)$. Taking into account that $w$ is one of the $k$ elements of $V$ and invoking Proposition \ref{p:id}, this means that our method furnishes at most \linebreak
 $k\cdot \sum_{\rho\, \in\, \Irep(G)} d_{\rho} r_{\rho,H} = kn$ linearly independent vectors ${\bf y}$ as above. To prove that we obtain a multiset of eigenvalues and bases of eigenspaces with the dimension sum as expected, we only need to show that our procedure gives rise to $kn$ independent eigenvectors of $A^{\alpha}$.
\smallskip

To do so, let us begin with the regular case, which may be identified with the situation when $H$ is the trivial group and $n=|G|$, and let us re-examine (\ref{eq:PhiD}) as follows. Recalling the `sections' ${\bf x}_{u,w}$, the identity (\ref{eq:PhiD}) may be rewritten with the help of (\ref{eq:Phi}) in the form
\begin{equation}\label{eq:PhiDw} \rho(g){\bf x}_{u,w}D_w = \sum_{v\sim u} \rho(g\alpha(u,v)){\bf x}_{v,w} \ \end{equation}
for every $u,w\in V$. And, as argued above, (\ref{eq:PhiDw}) means that for a given $\rho\in \Irep(G)$, fixed row and column indices $j,i\in [d]$ for $d=d_\rho={\rm dim}(\rho)$, and a given $w\in V$, the collection of the $(j,i)$-th entries of the product $\rho(g){\bf x}_{u,w}$ of the two $d\times d$ matrices for varying $g\in G$ and $u\in V$ determine a $k|G|$-dimensional vector ${\bf y} = {\bf y}_{j,(w,i)} = {\bf y}_{j,(w,i)}(\rho)$ that, if non-zero, is an eigenvector for the eigenvalue $\mu_{(w,i)}= \mu_{(w,i)}(\rho)$.
\smallskip

This can be expressed in the following equivalent form. For $j\in [d]$ let $R_{\rho,j}$ be the $|G|\times d$ matrix whose row indexed by $g\in G$ is equal to the $j$-th row of $\rho(g)$. We will be assuming the same indexing of the rows by elements $g\in G$ in {\em all} $R_{\rho,j}$ for $j\in [d]$. The $i$-th column of the product $R_{\rho,j}{\bf x}_{u,w}$ will then account for the $|G|$ coordinates of the vector ${\bf y}$ that corresponds to a fixed $u\in V$ but varying $g\in G$. Furthermore, let $S_{\rho,j} = {\rm diag}(R_{\rho,j},\ldots, R_{\rho,j})$ be the $k|G|\times kd$ block-diagonal matrix consisting of $k$ diagonal blocks, each equal to $R_{\rho,j}$. Here we assume that the row- and column-indexing of the blocks of $S_{\rho,j}$ by elements of the set $V$ is the same as used for the matrix $U=U_\rho$ at the beginning of the proof. Last, recall the matrix $U_w=U_{\rho,w}$, which may also be viewed as a concatenation of $k$ square $d$-dimensional sections ${\bf x}_{u,w}$ into one `column of blocks' for varying $u\in V$. Then, the entire vector ${\bf y}= {\bf y}_{j,(w,i)} (\rho)$ can be identified with the $i$-th column of the product $S_{\rho,j} U_{\rho,w}$. Since the matrix $U = U_\rho$ is just a concatenation of the strips $U_{\rho,w}$ for $w\in V$, extending the last finding over all $w\in V$, we conclude that:
\medskip

\noindent{\bf Claim 1.} {\sl The collection of $kd$ vectors ${\bf y}= {\bf y}_{j,(w,i)}(\rho)$ for varying $w\in V$ and $i\in [d]$ can be identified with the collection of all the $kd$ columns of the product $S_{\rho,j} U_\rho$.}
\medskip

Continuing in this fashion, let $S_\rho$ be the $|G|k\times d^2k$ block matrix of the form $S_\rho=(S_{\rho,1}, \ldots, S_{\rho,j}, \ldots, S_{\rho,d})$ for $d=d_\rho$ obtained by concatenating the matrices $S_{\rho,j}$ for $j\in [d]$. Furthermore, let $T_\rho={\rm diag}(U_\rho,\ldots, U_\rho)$ be the $kd^2\times kd^2$ block-diagonal matrix formed by $d$ identical diagonal blocks equal to $U_\rho$. By the same token as before, Claim 1 may then be extended to stating that the vectors ${\bf y}= {\bf y}_{j,(w,i)}(\rho)$, for varying $w\in V$ and $i\in [d]$ and also for varying $j\in [d]$, can be identified with the $d^2k$ columns of the product $S_\rho T_\rho$.
\smallskip

We are now ready to perform the final step of our procedure to determine all the eigenspaces of the lift. Let $S = (S_{\rho})_{\rho\, \in\, \Irep(G)}$ be the $|G|k\times |G|k$ square matrix obtained by concatenating the $|\Irep(G)|$ blocks $S_\rho$ of dimension $|G|k\times d_\rho k$ for $\rho\in \Irep(G)$ into one `thick row'. Finally, let $T = {\rm diag}(T_\rho)_{\rho\, \in\, \Irep(G)}$ be the block-diagonal $|G|k\times |G|k$ matrix consisting of $d_\rho$ blocks of type $d_\rho k \times d_\rho k$, all equal to $T\rho$; we will assume that both $S$ and $T$ arise from the same ordering of elements $\rho\in \Irep(G)$. Note that the number of columns of $S$ and the dimensions of $T$ are indeed equal to $|G|k$ since $\sum_{\rho\, \in\, \Irep(G)} d_{\rho}^2=|G|$. This allows us to extend Claim 1 and the subsequent statements as follows:
\medskip

\noindent {\bf Claim 2.} {\sl The collection of $k|G| = k\sum_{\rho\, \in\, \Irep(G)} d_{\rho}^2$ vectors ${\bf y}= {\bf y}_{j,(w,i)}(\rho)$ for varying $w\in V$, $i,j\in [d_\rho]$, and $\rho\in \Irep(G)$, can be identified with the collection of all the $k|G|$ columns of the product $ST$. }
\medskip

To prove that the $k|G|$ vectors ${\bf y}= {\bf y}_{j,(w,i)}(\rho)$ for varying $w\in V$, $i,j\in [d]$ and $\rho\in \Irep(G)$ are linearly independent---and hence constituting a complete set of representatives of the eigenspaces of the lift---it is sufficient to show that ${\rm rank}(ST)=k|G|$. Our way of introducing $T$ implies that it is a block-diagonal matrix consisting, for each $\rho\in \Irep(G)$, of $d_\rho$ diagonal $d_\rho k\times d_\rho k$ blocks equal to $U_\rho$. Recall now that $U_\rho$ is a $dk\times dk$ matrix whose columns are $dk$ linearly independent eigenvectors of the matrix $\rho(B)$ encountered at the beginning of the proof. This, in combination with our description of $T$, immediately implies that ${\rm rank}(T)=k|G|$. Also, recall that $S$ is a concatenation of $k|G|\times kd_{\rho}^2$ blocks $S_\rho$ for $\rho\in \Irep(G)$, where $S_\rho=(S_{\rho,1}, \ldots, S_{\rho,d_\rho})$ themselves are concatenations of the matrices $S_{\rho,j}={\rm diag}(R_{\rho,j},\ldots, R_{\rho,j})$ for $j\in [d]$ consisting of $k$ equal diagonal blocks, with $R_{\rho,j}$ being the $|G|\times d$ matrix whose row indexed by $g\in G$ is equal to the $j$-th row of $\rho(g)$. By the indexing convention introduced earlier, the diagonal blocks of $S_{\rho,j}$ can be assumed to be indexed by $(v,v)$ for $v\in V$; such a $v\in V$ will be the {\em position} of the block $R_{\rho,j}$ in $S_{\rho,j}$.
\smallskip

With this preparation, we now show that any two distinct columns of $S$ are orthogonal (obviously, all these columns are non-zero vectors). Indeed, let us consider two distinct columns $c=c(j,i,v,\rho)$ and $c'=c'(j',i',v',\rho')$ of $S$, obtained as extensions of the $i$-th column of $R_{\rho,j}$ at position $v$ and the $i'$-th column of $R_{\rho',j'}$ at position $v'$. If $v\ne v'$, then, by construction of $S$, the dot product $c\cdot c'$ is equal to zero simply because, in the product, every non-zero coordinate of $c$ meets a zero coordinate of $c'$ and vice versa. If $v=v'$, then one just needs to realize that elements of $R_{\rho,j}$ and $R_{\rho',j'}$ are entries $\rho(g)_{j,i}$ and $\rho'(g)_{j',i'}$ of the matrices $\rho(g)$ and $\rho'(g)$ for $g\in G$, respectively. The product $c\cdot c'$ for $v=v'$ thus reduces to $\sum_{g\in G} \rho(g)_{j,i} \rho'(g)_{j',i'}$ that, by the Great Orthogonality Theorem, is equal to zero whenever $j\ne j'$, $i\ne i'$ or $\rho\ne \rho'$. This proves the orthogonality of the columns of $S$.
\smallskip

But then, the rank of $ST$ is equal to $k|G|$, which means that the column vectors of $ST$ give a complete set of representatives of the eigenspaces of the lift $\Gamma^{\alpha}$ in the regular case.
\smallskip

It remains to consider the case when $H$ is an arbitrary subgroup of $G$ of index $n$. Using the previously introduced notation and realizing that $\rho(H)\rho(g)=\rho(Hg)$, in this general case the identity (\ref{eq:PhiD}) with the help of (\ref{eq:Phi}) may be rewritten in the form
\begin{equation}\label{eq:PhiDwH} \rho(J){\bf x}_{u,w}D_w = \sum_{v\sim u} \rho(J\alpha(u,v)){\bf x}_{v,w} \ \end{equation}
for every $u,w\in V$ and each right coset $J\in G/H$. Recall that (\ref {eq:PhiDwH}) now means that for a given $\rho\in \Irep(G)$, fixed row and column indices $j,i\in [d]$ for $d=d_\rho$, and a given $w\in V$, the collection of the $(j,i)$-th entries of the product $\rho(J){\bf x}_{u,w}$ of the two $d\times d$ matrices for varying $J\in G/H$ and $u\in V$ determine a $kn$-dimensional vector ${\bf y}^H = {\bf y}^H_{j,(w,i)}(\rho)$ which, if non-zero, is an eigenvector of the relative lift for the eigenvalue $\mu_{(w,i)}= \mu_{(w,i)}(\rho)$.
\smallskip

Bearing this in mind, it follows that one can go through the same individual steps of the procedure devised for the regular case. The only items in need of modification will be those in the description of the matrix $S$ that refer to row coordinates indexed by elements of $G$; these need to be replaced by {\em row sums} indexed by the corresponding cosets of $G/H$. Thus, instead of $R_{\rho,j}$ we let $R^H_{\rho,j}$ be the $n\times d_\rho$ matrix whose row with index $J\in G/H$ is the $j$-th row of the matrix $\rho(J)=\sum_{g\in J}\rho(g)$. Similarly, $S^H_{\rho,j}={\rm diag}(R^H_{\rho,j})$ will now be the $kn\times kd$ block-diagonal matrix with $k$ blocks each equal to $R^H_{\rho,j}$, and the $kn \times kd^2$ matrix $S^H_\rho$ will be formed by the concatenation of the blocks $S^H_{\rho,j}$ taken over all $j\in [d]$. Finally, the $kn\times k|G|$ matrix $S^H$ will be a concatenation of the blocks $S^H_\rho$ taken over all $\rho\in \Irep(G)$. In all these matrices, we assume the same indexing conventions as before. And, arguing exactly as in the regular case, we obtain:
\medskip

\noindent {\bf Claim 3.} {\sl The collection of $k|G|$ vectors ${\bf y}^H= {\bf y}^H_{j,(w,i)}(\rho)$ of dimension $kn$, for varying $w\in V$, $i,j\in [d_\rho]$ and taken over all $\rho\in \Irep(G)$ can be identified with the collection of all the $k|G|$ columns of the product $S^HT$. }
\medskip

To finish the argument, it remains to show that the rank of $S^HT$ is equal to $kn$. From the previous part, we know that $T$ is non-singular, and so it is sufficient to prove that ${\rm rank}(S^H)=kn$. We derive this from the fact that ${\rm rank}(S)=k|G|$, established earlier for the regular case. The way $S^H$ has been defined implies that $S^H$ is obtained from $S$ by replacing, for every $J\in G/H$ and every fixed $v\in V$, the $|H|$ rows corresponding to the row indices $(g,v)$ for $g\in J$ by the {\em sum} of these $|J|=|H|$ rows. Since we have $k$ possibilities for $v\in V$ and $n$ cosets, and adding all rows corresponding to a coset can be done by $|H|-1$ summations of {\em pairs} of rows, creating this way $S^H$ from $S$ can be done by performing $kn(|H|-1)$ sums of {\em pairs} of rows. By summation of a pair of rows, the rank of the resulting matrix decreases by at most one. Since $n|H|=|G|$, it follows that
\[ {\rm rank}(S^H) \ge {\rm rank}(S) - kn(|H|-1) = k|G| - (k|G|-kn) = kn \]
and, as we trivially have $kn\ge {\rm rank}(S^H)$, we conclude that ${\rm rank}(S^H) = kn$, as claimed. It means that the columns of $S^HT$ indeed account for representatives of all the eigenspaces of the permutation lift $\Gamma^{\alpha}$. This completes the proof.
\end{proof}
\smallskip

In the notation introduced in the above proof, we have the following consequence.
\smallskip

\begin{corollary}\label{cor:main}
All the eigenspaces corresponding to the permutation lift $\Gamma^{\alpha}$ are generated by a subset of $kn$ independent columns of the product $S^HT$. \hfill $\Box$
\end{corollary}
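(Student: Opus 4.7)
The plan is to observe that this corollary is essentially a direct readout of the construction carried out in the proof of Theorem~\ref{t:spec}, so the task reduces to highlighting which steps of that proof supply the required statement. First I would recall Claim~3 from the proof: each column of the product $S^HT$ was identified with a vector ${\bf y}^H={\bf y}^H_{j,(w,i)}(\rho)$ of dimension $kn$, and the derivation via equations~\eqref{eq:PhiDwH} and \eqref{eq:Phi} showed that whenever such a column is non-zero, it is an eigenvector of the adjacency matrix $A^\alpha$ of the lift $\Gamma^\alpha$, with eigenvalue $\mu_{(w,i)}(\rho)$ drawn from $\Sp(\rho(B))$.

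Next I would invoke the rank computation at the end of the proof of Theorem~\ref{t:spec}, namely that $\rank(S^HT)=kn$. This is the decisive step: even though $S^HT$ has $k|G|$ columns, only $kn$ of them can be linearly independent. Because the space on which $A^\alpha$ acts is itself $kn$-dimensional, any choice of $kn$ independent columns of $S^HT$ automatically forms a basis of $\C^{kn}$ consisting entirely of eigenvectors of $A^\alpha$.

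Finally, grouping such a selected basis according to the associated eigenvalue $\mu_{(w,i)}(\rho)$ yields a basis of each eigenspace of $A^\alpha$, which is what the corollary asserts. There is no additional obstacle beyond what has already been overcome in the theorem: the only thing worth emphasizing is the matching of dimensions, which forces the chosen $kn$ independent columns to actually span $\C^{kn}$ rather than only a proper subspace. Thus the corollary follows without further computation. \hfill $\Box$
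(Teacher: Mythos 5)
Your proposal is correct and follows essentially the same route as the paper, which states Corollary~\ref{cor:main} as an immediate consequence of the proof of Theorem~\ref{t:spec}: Claim~3 identifies the columns of $S^HT$ with the vectors ${\bf y}^H_{j,(w,i)}(\rho)$, and the rank argument $\rank(S^HT)=kn$ shows that $kn$ independent columns exist and necessarily span $\C^{kn}$, hence yield bases of all eigenspaces when grouped by eigenvalue. Nothing further is needed.
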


\section{An example}\label{sec:example}

Following the method of Theorem \ref{t:spec} and keeping to the notation introduced in its proof, we will now work out the spectrum of the relative lift from Example \ref{E:1}. Referring to Figure 1, recall that we consider the dumbbell graph $\Gamma$ with vertex set $V=\{u,v\}$ and arc set $X=\{a,a^-,b,b^-, c,c^-\}$, where the pairs $\{a,a^-\}$, $\{b,b^-\}$ and $\{c,c^-\}$ determined a loop at $u$, an edge joining $u$ to $v$, and a loop at $v$, respectively. The permutation voltage assignment $\alpha$ on $\Gamma$ in the group ${\rm Sym}(3)$ was given by $\alpha(a)=(23)$, $\alpha(b)=e$, and $\alpha(c)=(12)$. An equivalent description is to regard $\alpha$ as a relative voltage assignment, with values of $\alpha$ on arcs acting on the right cosets of $G/H$ for $H={\rm Stab}_G(1)=\{e,(23)\}$ by right multiplication. Letting $g=(23)$ and $h=(12)$, the base matrix $B$ of $G$ with entries in the group algebra $\C(G)$ has the form
\begin{equation}
\label{base-matrix}
B = \left(\begin{array}{cc} \alpha(a)+\alpha(a^-) & \alpha(b) \\ \alpha(b^-) & \alpha(c)+\alpha(c^-) \end{array}\right) = \left(\begin{array}{cc} 2g & e \\ e & 2h \end{array}\right).
\end{equation}

The (transitive) voltage group $G={\rm Sym}(3)=\{e,g,h,r,s,t\}$ with $r=ghg=(13)$, $s=gh=(123)$ and $t=hg=(132)$ has a complete set of irreducible unitary representations $\Irep(G)=\{\iota,\pi,\sigma\}$ corresponding to the symmetries of an equilateral triangle with vertices positioned at the complex points $e^{i\frac{2\pi}{3}}$, $1$, and $e^{i\frac{4\pi}{3}}$, where
\begin{align*}
\iota &: G\to \{1\},\quad \dim(\iota)=1\quad \mbox{(the trivial representation)},\\
\pi &: G\to \{\pm 1\},\quad \dim(\pi)=1\quad \mbox{(the parity permutation representation), and},\\
\sigma &:G\to GL(2,\C),\quad \dim(\sigma)=2, \mbox{ generated by the unitary matrices}
\end{align*}
\begin{equation*}
\label{eq:reprho}
\sigma(g) = \frac{1}{2}\left(\begin{array}{cc} -1 & -\sqrt{3} \\  -\sqrt{3} & 1 \end{array}\right)\qquad {\rm and} \qquad
\sigma(h) = \frac{1}{2}\left(\begin{array}{cc} -1 & \sqrt{3} \\ \sqrt{3} & 1 \end{array}\right),
\end{equation*}
whence we obtain
$$
\sigma(r)=\sigma(ghg)= \left(\begin{array}{cc} 1 & 0\\
0 & -1 \end{array}\right),\quad \sigma(s)=\sigma(gh)= \frac{1}{2}\left(\begin{array}{cc} -1 & -\sqrt{3} \\  \sqrt{3} & -1 \end{array}\right),\quad \mbox{and}
$$
$$
\sigma(t)=\sigma(hg)= \frac{1}{2}\left(\begin{array}{cc} -1 & \sqrt{3} \\  -\sqrt{3} & -1 \end{array}\right).
$$
Then, the images of $B$ under these three representations are given by
\begin{equation}
\label{eq:rhoB}
\iota(B) {=}  \left(\begin{array}{cc} 2 & 1 \\ 1 & 2 \end{array}\right),\
	\pi(B) {=} \left(\begin{array}{cc} -2 & 1 \\ 1 & -2 \end{array}\right),\
\sigma(B) = \left(\begin{array}{cccc}
-1 & -\sqrt{3} & 1 & 0 \\  -\sqrt{3} & 1 & 0 & 1 \\ 1 & 0 & -1 &  \sqrt{3} \\ 0 & 1 & \sqrt{3} & 1
\end{array}\right),
\end{equation}
with spectra $\Sp(\iota(B))=\{1,3\}$, $\Sp(\pi(B))=\{-3,-1\}$, and $\Sp(\sigma(B))=\{\pm\sqrt{3},\pm\sqrt{7}\}$. To determine the `multiplication factors' appearing in front of the spectra in the statement of Theorem \ref{t:spec}, we evaluate $\iota(H)=\iota(e)+\iota(g)=1+1=2$, of rank 1, $\pi(H)=\pi(e) + \pi(g) = 1-1=0$, of rank 0, and
\begin{equation}
\label{eq:rH}
\sigma(H)=\sigma(e) + \sigma(g) = \frac{1}{2}\left(\begin{array}{rr} 1 & -\sqrt{3} \\ -\sqrt{3} & 3\end{array}\right),
\end{equation}
of rank 1. By Theorem \ref{t:spec}, the spectrum of the adjacency matrix $A^{\alpha}$ of the relative lift $\Gamma^{\alpha}$ is obtained by concatenating the sets $1\cdot \{1,3\}$, $0\cdot \{-3,-1\}$, and $1\cdot \{\pm \sqrt{3},\pm \sqrt{7}\}$, giving
$$
\Sp(A^{\alpha}) = \{1,\pm\sqrt{3},\pm \sqrt{7},3\}.
$$

Due to the complexity of determining the corresponding eigenspaces in the second part of the proof of Theorem \ref{t:spec}, we illustrate the details of the process on the same example. Let us again begin by considering the regular case, that is, when $H$ is the trivial group. Then, as explained in the proof, the eigenvectors of the ordinary lift $\Gamma_0^{\alpha}$, shown in Figure \ref{fig2}, are obtained from the matrix product $ST$, where, in the previously introduced notation, $S$ is now a $12\times 12$ matrix with block form
$$
S= (S_\iota\, |\, S_\pi\, |\, S_\sigma) = \left(
\begin{array}{cc|cc|cccc}
S_{\iota,1} &  O & S_{\pi,1} & O & S_{\sigma,1} & O & S_{\sigma,2} & O \\
O & S_{\iota,1} &  O & S_{\pi,1} & O & S_{\sigma,1} & O & S_{\sigma,2}
\end{array}\right),
$$
where
\begin{align*}
S_{\iota,1} &=(\iota(e),\iota(g),\iota(h),\iota(r),\iota(s),\iota(t))^{\top}
=(1,1,1,1,1,1)^{\top},\\
S_{\pi,1} &=(\pi(e),\pi(g),\pi(h),\ldots,\pi(t))^{\top}=(1,-1,-1,-1,1,1)^{\top},\\
S_{\sigma,1} &=(\sigma(e)_1,\sigma(g)_1,\ldots,\sigma(t)_1)^{\top}
=
\left(
\begin{array}{cccccc}
1 & -1/2 & -1/2 & 1 & -1/2 & -1/2\\
0 & -\sqrt{3}/2 & \sqrt{3}/2 & 0 & -\sqrt{3}/2 & \sqrt{3}/2
\end{array}
\right)^{\top},
\\
S_{\sigma,2} &=(\sigma(e)_2,\sigma(g)_2,\ldots,\sigma(t)_2)^{\top}
=
\left(
\begin{array}{cccccc}
0 & -\sqrt{3}/2 & \sqrt{3}/2 & 0 & \sqrt{3}/2 & -\sqrt{3}/2\\
1 & 1/2 & 1/2 & -1 & -1/2 & -1/2
\end{array}
\right)^{\top}.
\end{align*}
Note that, as required in the proof, $S_{\sigma,1}$ and $S_{\sigma,2}$ are formed, respectively, out of the first and second rows of the matrices $\sigma(e)$ and $\sigma(g)$, and so on. The matrix $T$ in this case is a $12\times 12$ matrix with block form $T = {\rm diag}(T_\iota,T_\pi,T_\sigma)$, where $T_\iota = U_\iota$, $T_\pi=U_\pi$, and $T_\sigma = {\rm diag}(U_\sigma,U_\sigma)$, so that
$$
T =\left(
\begin{array}{cccc}
U_\iota &  O & O & O \\
O & U_\pi & O & O \\
O & O & U_\sigma & O \\
O & O & O & U_\sigma
\end{array}\right).
$$
Here one needs to be careful about indexation of rows and columns to align eigenvectors with the corresponding eigenvalues. In accordance with the proof of Theorem \ref{t:spec}, for each $\rho\in\{\iota,\pi,\sigma\}$ of dimension $d_\rho$, the  $d_\rho\times d_\rho$ matrix $U_\rho$ is formed by a choice of the corresponding eigenvectors of $\rho(B)$. To proceed, we choose to list the eigenvalues $\mu_{(w,i)}=\mu_{(w,i)}(\rho)$ for $w\in V=\{u,v\}$, $\rho$ as above, and $i\in [d_{\sigma}]$, in the form $\mu_{(u,1)}(\iota)=3$, $\mu_{(v,1)} (\iota)=1$, $\mu_{(u,1)}(\pi)=-3$, $\mu_{(v,1)}(\pi)=-1$, $\mu_{(u,1)}(\sigma)=\sqrt{3}$, $\mu_{(u,2)}(\sigma) =\sqrt{7}$, $\mu_{(v,1)}(\sigma)=-\sqrt{7}$, and $\mu_{(v,2)}(\sigma)=-\sqrt{3}$, together with a choice of the corresponding eigenvectors as follows:
\begin{equation*}\label{eq:Ui}
U_\iota = \left(
\begin{array}{cc}
1  & -1 \\
1  &  1
\end{array}
\right),\quad
U_\pi = \left(
\begin{array}{cc}
-1  & 1 \\
1  &  1
\end{array}
\right),\quad \mbox{and} \quad
U_\rho = \left(
\begin{array}{cccc}
1  & -\sqrt{3}   &   \sqrt{3}   & -1 \\
-1 & \sqrt{7}+2  &  \sqrt{7}-2  & -1 \\
1  &  \sqrt{3}   &  -\sqrt{3}   & -1 \\
1  & \sqrt{7}+2  &  \sqrt{7}-2  &  1
\end{array}\right),
\end{equation*}
where, from left to right, the columns of $U_\iota$ correspond to eigenvalues $3$ and $1$, the columns of $U_\pi$ to the eigenvalues $-3$ and $-1$, and finally the columns of $U_\sigma$ correspond to the eigenvalues $\sqrt{3}, \sqrt{7},\sqrt{-7}$ and $\sqrt{-3}$, respectively. Then, by the proof of Theorem \ref{t:spec}, the product $ST$ yields all the eigenvectors of the adjacency matrix $A^{\alpha}$ of the regular lift, corresponding to the eigenvalues $\pm 3$, $\pm 1$, $\pm \sqrt{3}$, and $\pm \sqrt{7}$ (the last four with multiplicity $2$); see also Section \ref{sec:chi}.
\smallskip

For the lift relative to $H$, note first that the right cosets of $H=\{e,g\}$ are $Hh=\{h,gh\}$ and $Hghg=\{ghg,hg\}$. These correspond, respectively, to the rows $\{1,2\}$, $\{3,5\}$, and $\{4,6\}$ out of the  first 6 rows of the matrix $S$ (and by the form of $S$ it is clearly sufficient to restrict ourselves to its first 6 rows). Then, the new matrix $S^H$ has blocks
\begin{align*}
S^H_\iota &= (\iota(e)+\iota(g),\iota(h)+\iota(s),\iota(r)+\iota(t))^{\top}=(2,2,2)^{\top},\\
S^H_\pi &=(\pi(e)+\pi(g),\pi(h)+\pi(s),\pi(r)+\pi(t))^{\top}=(0,0,0)^{\top},\\
S^H_{\sigma,1} &=(\sigma(e)_1+\sigma(g)_1,\sigma(h)_1+\sigma(s)_1,\sigma(r)_1+\sigma(t)_1)^{\top}
=
\left(
\begin{array}{ccc}
1/2        & -1 & 1/2 \\
-\sqrt{3}/2 & 0  & \sqrt{3}/2
\end{array}
\right)^{\top},
\\
S^H_{\sigma,2} &=(\sigma(e)_2+\sigma(g)_2,\sigma(h)_2+\sigma(s)_2,\sigma(r)_2+\sigma(t)_2)^{\top}
=
\left(
\begin{array}{ccc}
-\sqrt{3}/2 & \sqrt{3} & -\sqrt{3}/2\\
3/2         & 0        & -3/2
\end{array}
\right)^{\top}.
\end{align*}
As the product $S^HT$, we now obtain the $6\times 12$ matrix whose first four columns (the first two of which  correspond to the eigenvalues $1$ and $-1$) are
{\small \[ \left( \begin{array}{rrrr} 2 & -2 & 0 & 0 \\ 2 & -2 & 0 & 0 \\ 2 & -2 & 0 & 0 \\ 2 & 2 & 0 & 0 \\ 2 & 2 & 0 & 0 \\ 2 & 2 & 0 & 0 \end{array} \right),\]}
and the remaining eight columns of $S^HT$ have the form
{\small $$
\begingroup
\setlength\arraycolsep{2pt}
\left(
\begin{array}{cccccccc}
 \frac12(\sqrt{3}{+}1) & -\frac{\sqrt{3}}{2}(3{+}\sqrt{7}) & \frac{\sqrt{3}}{2}(3{-}\sqrt{7}) &   \frac12(\sqrt{3}{-}1) & -\frac12(3{+}\sqrt{3}) & \frac32(\sqrt{7}{+}3) & \frac32(\sqrt{7}{-}3) & \frac12(\sqrt{3}-3) \\
 -1     & \sqrt{3}  & -\sqrt{3}  & 1      & \sqrt{3}  & -3 &  3 & -\sqrt{3} \\
\frac12(1{-}\sqrt{3}) & \frac{\sqrt{3}}{2}(1{+}\sqrt{7})  & \frac{\sqrt{3}}{2}(\sqrt{7}{-}1) & -\frac12(1{+}\sqrt{3})  & \frac12(3{-}\sqrt{3}) & -\frac32(1{+}\sqrt{7}) & \frac32(1{-}\sqrt{7}) & \frac12(3{+}\sqrt{3}) \\
\frac12(1{-}\sqrt{3}) & -\frac{\sqrt{3}}{2}(1{+}\sqrt{7}) & \frac{\sqrt{3}}{2}(1{-}\sqrt{7}) & -\frac12(1{+}\sqrt{3}) & \frac12(3{-}\sqrt{3}) & \frac32(1{+}\sqrt{7}) & \frac32(\sqrt{7}{-}1)  & \frac12(3{+}\sqrt{3}) \\
 -1     & -\sqrt{3} &  \sqrt{3}  & 1  & \sqrt{3}  & 3  & -3 & -\sqrt{3} \\
\frac12(\sqrt{3}{+}1) & \frac{\sqrt{3}}{2}(3{+}\sqrt{7}) & \frac{\sqrt{3}}{2}(\sqrt{7}{-}3) &  \frac12(\sqrt{3}{-}1) & -\frac12(3{+}\sqrt{3}) & -\frac32(\sqrt{7}{+}3)  & \frac32(3{-}\sqrt{7}) & \frac12(\sqrt{3}-3)
\end{array}
\right),
\endgroup
$$}
with columns from the left to the right corresponding to the eigenvalues $\sqrt{3}$, $\sqrt{7}$, $-\sqrt{7}$, $-\sqrt{3}$, $\sqrt{3}$, $\sqrt{7}$, $-\sqrt{7}$ and $-\sqrt{3}$, as dictated by our chosen indexation of $S$ and $T$. Now, observe that the 9th and the 12th columns of $S^HT$ are a $(-\sqrt{3})$-multiple of the 5th and the 8th columns, respectively; similarly, the 10th and the 11th columns of $S^HT$ are the same $(-\sqrt{3})$-multiple of the 6th and the 7th columns. This means that the eigenspaces of the relative lift $\Gamma^{\alpha}$ for the $6$ eigenvalues $3,1,\sqrt{3},\sqrt{7},-\sqrt{7},-\sqrt{3}$ are all one-dimensional, and may be taken to be generated by the 6-dimensional vectors forming the 1st, 2nd, 5th, 6th, 7th and 8th columns, respectively, of the above matrix $S^HT$. The sum of the dimensions of all the eigenspaces of $\Gamma^{\alpha}$ is $6=2\cdot 3 = kn$, which is in agreement with Corollary \ref{cor:main}.

\section{The regular case with the help of group characters}\label{sec:chi}

If the covering $\Gamma^{\alpha}\to \Gamma$ is regular, the multiset of eigenvalues of the lift can also be obtained using group characters, as it was demonstrated in \cite{dfs19}, even for regular lifts of {\em digraphs}. The eigenvalue multiplicities furnished by \cite{dfs19} are all algebraic---which, for digraphs, may be distinct from geometric multiplicities---and therefore the method of \cite{dfs19} gives no information about eigenspaces in general. To relate the techniques used in the previous sections with those of \cite{dfs19}, we briefly sum up the essentials. This will also offer a complete argument for establishing the key equality \eqref{eq:key} (see below) that appears, in an equivalent form, in the proof of Theorem 2.1 in \cite{dfs19}.
\smallskip

The method of \cite{dfs19} is based on establishing a relationship between counting closed walks in a relative lift of a connected base graph $\Gamma$ (now allowed to contain both undirected and directed edges) and characters of the voltage group $G$. As before, $B$ will be the `voltage adjacency matrix' for $\Gamma$ indexed by its vertex set $V$, with elements $B_{u,v}$ given as in (\ref{eq:Balpha}). An important role in what follows is played by the diagonal elements of the powers $B^\ell$ for $\ell\ge 1$, evaluated on the group algebra $\C(G)$. For $u\in V$, every such diagonal element has the form $(B^\ell)_{u,u} = \sum_{g\in G}a^{(\ell)}_{(u,g)}g \in \C(G)$ for some integers $a^{(\ell)}_{(u,g)}$. Next, for every $\rho\in \Irep(G)$ and for the corresponding character $\chi^\rho$ of $G$ we let
\begin{equation}
	\label{eq:chiB}
	\chi^\rho((B^\ell)_{u,u}) = \sum_{g\in G} a^{(\ell)}_{(u,g)}\chi^\rho(g).
\end{equation}
Note that, in this way, we only extend $\chi^\rho$ over particular linear combinations in $\C(G)$. For the unit element $e\in G$, we will derive a formula for the coefficients $a^{(\ell)}_{(u,e)}$ from (\ref{eq:chiB}) by evaluating the inner product of the vectors ${\bf z}=(\chi^\rho((B^\ell)_{u,u}))_{\rho\, \in\, \Irep(G)}$ and ${\bf z}'=(\chi^\rho(e))_{\rho\, \in\, \Irep(G)}$ in two ways (assuming the same indexing in both vectors). First, using the definition of $\chi^\rho((B^\ell)_{u,u})$ and, in the last step, the second form of the Orthogonality Theorem for Characters from Section \ref{sec:basic}, one obtains
\[ {\bf z}{\cdot}{\bf z}' = \sum_{\rho\, \in \, \Irep(G)}\left( \sum_{g\in G} a^{(\ell)}_{(u,g)}\chi^\rho(g) \right) \chi^\rho(e) = \sum_{g\in G}a^{(\ell)}_{(u,g)}\left( \sum_{\rho\, \in \, \Irep(G)} \chi^\rho(g) \chi^\rho(e) \right) = a^{(\ell)}_{(u,e)}|G|\ . \]
On the other hand, realizing that $\chi^\rho(e)=d_\rho$ is the dimension of $\rho$, one has
\[ {\bf z}{\cdot}{\bf z}' = \sum_{\rho\, \in \, \Irep(G)} \chi^\rho((B^\ell)_{u,u}) \chi^\rho(e) = \sum_{\rho\, \in \, \Irep(G)} d_\rho \chi^\rho((B^\ell)_{u,u}), \]
and a comparison of the two results gives
\begin{equation}\label{eq:aell}
a^{(\ell)}_{(u,e)} = \frac{1}{|G|} \sum_{\rho\, \in \, \Irep(G)} d_\rho \chi^\rho((B^\ell)_{u,u})\ .
\end{equation}

The number of rooted closed walks of length $\ell$ in the regular lift $\Gamma^\alpha$ is obviously equal to the trace ${\rm tr}((A^\alpha)^\ell)$ of the $\ell$-th power of the adjacency matrix of the lift. By Lemma 1.1. of \cite{dfs19}, this trace (equal also to the sum of $\ell$-th powers of entries in the spectrum of the lift) can be evaluated as follows, where we used \eqref{eq:aell} in the last step:
\begin{equation}\label{eq:ellpow}
\tr ((A^{\alpha})^\ell)=\sum_{\lambda\in \Sp(\G^{\alpha})} \lambda^\ell=|G|\sum_{u\in V} a_{(u,e)}^{(\ell)} = \sum_{u\in V}\sum_{\rho\, \in\, \Irep(G)}d_\rho\chi^\rho((B^{\ell})_{u,u})\ .
\end{equation}
Developing now (\ref{eq:ellpow}) with the help of elementary facts about traces and group characters, and also recalling earlier definitions such as \eqref{eq:B} and \eqref{eq:chiB}, gives
\[\tr ((A^{\alpha})^\ell)
{=}\sum_{u\in V}\sum_{\rho\, \in\, \Irep(G)}d_\rho\chi^\rho((B^{\ell})_{u,u})
{=}\sum_{\rho\, \in\, \Irep(G)}d_\rho \sum_{u\in V} {\rm tr}(\rho(B^{\ell})_{u,u})
{=}\sum_{\rho\, \in\, \Irep(G)} d_\rho \tr(\rho(B^{\ell})),\]
and realizing the meaning of the left-hand side (see also (\ref{eq:ellpow})) and the right-hand side (which is simply the sum of all eigenvalues of $\rho(B^\ell)$, each taken $d_\rho$ times), one obtains
\begin{equation}
\label{eq:key}
\sum_{\lambda\in \Sp(\G^{\alpha})} \lambda^\ell = \sum_{\rho\, \in\, \Irep(G)} d_\rho \left(\sum_{\lambda\, \in\, {\rm Sp}(\rho(B^\ell ))} \lambda^\ell\right)\ .
\end{equation}
Notice that, in the sums on the right-hand side of \eqref{eq:key}, we have a total of $k\sum_{\rho\, \in\, \Irep(G)} d_\rho^2= k|G|$ terms, which is the number of eigenvalues of the adjacency matrix $A^{\alpha}$ of the digraph $\G^{\alpha}$ obtained as a regular lift of $\G$. Then, as the above equality holds for every $\ell=1,2,\ldots$, the multisets of eigenvalues on the left-hand and on the right-hand sides of \eqref{eq:key} must coincide (see, for example, Gould \cite{go99}). This furnishes an alternative proof of the `eigenvalues' part of Theorem \ref{t:spec} for regular lifts of digraphs.
\smallskip

As a consequence, in terms of group characters, the spectrum of a regular lift $\Gamma^{\alpha}$ of a digraph $\G$ with voltage group $G$ is determined as follows (see \cite{dfs19} for a full account).

\begin{proposition}[\cite{dfs19}]\label{propo-chi}
Let $P_{\ell}$ be the set of closed walks of length $\ell\ge 1$ in a connected digraph $\Gamma$ with vertex set  $V$. For a (closed) walk $p\in P_{\ell}$ of the form $u_0{\rightarrow} u_1{\rightarrow} \cdots {\rightarrow} u_{\ell-1}{\rightarrow} u_{\ell}{=}u_0$ and for a $\rho\in \Irep(G)$, we let $\chi^\rho(p)=\chi^\rho\left(\prod_{j=0}^{\ell-1}\alpha(u_ju_{j+1})\right)$.
Then, for each $\rho\in \Irep(G)$, the eigenvalues $\lambda_{u,j}$, for $u\in V$ and $j\in[d_\rho]$, of the lift $\G^{\alpha}$, are the solutions $($each repeated $d_\rho$ times$)$ of the system
	\begin{equation}
	\label{babai-gen}
	\mathop{\sum_{u\in V,}}_{j\in[d_\rho]} \lambda_{u,j}^{\ell}=\sum_{p\in P_{\ell}}\chi^\rho(p),\qquad \ell=1,\ldots, d_\rho|V|.
	\end{equation}
\end{proposition}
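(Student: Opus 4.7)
The plan is to reduce the system \eqref{babai-gen} to a straightforward trace computation on $\rho(B)$ and then invoke a Newton-identities argument.

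First, I would observe that by Theorem \ref{t:spec} specialized to the regular case ($H$ trivial, so $\rank(\rho(H)) = d_\rho$), the $d_\rho|V|$ numbers $\lambda_{u,j}$ in the statement can be identified with the multiset $\Sp(\rho(B))$, each occurring in $\Sp(\Gamma^\alpha)$ with multiplicity $d_\rho$. In particular,
\[
\sum_{u\in V,\, j\in[d_\rho]} \lambda_{u,j}^{\ell} \;=\; \tr(\rho(B)^{\ell}) \;=\; \tr(\rho(B^{\ell})),
\]
where the last equality uses that $\rho$, extended linearly to $\C(G)$, is multiplicative on products coming from $B^{\ell}$ (each entry $(B^{\ell})_{u,v}$ lies in $\C(G)$, and the matrix product $\rho(B)\rho(B) = \rho(B^2)$ is obtained block-entrywise because $\rho$ is an algebra homomorphism on $\C(G)$).

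Next, I would expand $B^{\ell}$ in the group algebra. Writing $B_{u,v} = \sum_{a:\, u\to v} \alpha(a)$ and multiplying, the $(u_0,u_0)$-entry of $B^{\ell}$ is a formal $\C(G)$-sum indexed by all closed walks $p: u_0 \to u_1 \to \cdots \to u_{\ell-1} \to u_0$ in $\Gamma$, with $p$ contributing the ordered product $\prod_{j=0}^{\ell-1} \alpha(u_ju_{j+1})$. Applying $\rho$, taking traces, and summing over $u_0\in V$ gives, by the very definition $\chi^\rho(p) = \chi^\rho\bigl(\prod_{j=0}^{\ell-1} \alpha(u_ju_{j+1})\bigr)$,
\[
\tr(\rho(B^{\ell})) \;=\; \sum_{u_0\in V}\;\sum_{\substack{p\in P_\ell\\ p \text{ starts at } u_0}} \chi^\rho(p) \;=\; \sum_{p\in P_\ell} \chi^\rho(p),
\]
which is precisely \eqref{babai-gen}.

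Finally, to see that this system for $\ell = 1,\ldots,d_\rho|V|$ determines the multiset $\{\lambda_{u,j}\}$ of size $N = d_\rho|V|$ uniquely, I would appeal to Newton's identities: the first $N$ power sums of any multiset of $N$ complex numbers determine its elementary symmetric polynomials, hence the multiset itself. Combined with the multiplicity factor $d_\rho$ already supplied by Theorem \ref{t:spec}, this recovers the full spectrum of $\Gamma^\alpha$.

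The main obstacle, and the step that needs the most care, is the bookkeeping in the second paragraph: one must match each term arising in the noncommutative expansion of the matrix product $B^{\ell}$ (entries lying in $\C(G)$, not in $\C$) with a unique closed walk of length $\ell$, and verify that the \emph{ordered} product of voltages read along the walk is what $\chi^\rho$ is being applied to. Once this identification is set up unambiguously, the trace identity, and thereby the proposition, follows.
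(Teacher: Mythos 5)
Your middle computation---expanding $(B^{\ell})_{u_0,u_0}$ over closed walks, applying $\rho$ entrywise so that $\rho(B^{\ell})=\rho(B)^{\ell}$, and concluding $\sum_{p\in P_\ell}\chi^\rho(p)=\tr(\rho(B)^{\ell})$---is correct and coincides with the identity $\sum_{u\in V}\chi^\rho((B^{\ell})_{u,u})=\tr(\rho(B^{\ell}))$ that the paper uses. The Newton-identities step at the end is also fine (the paper cites Gould \cite{go99} for the same purpose). The genuine gap is in your first step. Proposition~\ref{propo-chi} is stated for \emph{digraphs}, but Theorem~\ref{t:spec} is stated and proved only for undirected base graphs: its proof starts from $\rho(B)U=UD$ with $U$ an invertible matrix of eigenvectors, i.e., it requires $\rho(B)$ to be diagonalizable. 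For undirected graphs this holds because $B_{v,u}$ is the sum of the inverses of the voltages in $B_{u,v}$ and the $\rho$'s are unitary, so $\rho(B)$ is Hermitian; for a digraph $\rho(B)$ need not be diagonalizable, and the multiplicities in the proposition are algebraic ones (the paper explicitly warns that for digraphs algebraic and geometric multiplicities may differ and that the character method gives no eigenspace information). So you cannot ``identify the $\lambda_{u,j}$ with $\Sp(\rho(B))$, each occurring with multiplicity $d_\rho$ in $\Sp(\Gamma^\alpha)$'' by citing Theorem~\ref{t:spec}: in the digraph setting that identification is exactly what still has to be proved.

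The paper closes this gap by an argument independent of Theorem~\ref{t:spec}. It writes $(B^{\ell})_{u,u}=\sum_{g\in G}a^{(\ell)}_{(u,g)}g$, observes that the number of closed walks of length $\ell$ in the regular lift is $\tr((A^\alpha)^{\ell})=|G|\sum_{u}a^{(\ell)}_{(u,e)}$ (only identity-voltage closed walks in the base lift to closed walks), and extracts $a^{(\ell)}_{(u,e)}$ via the second orthogonality relation for characters, as in \eqref{eq:aell}. This yields \eqref{eq:key}, an equality of power sums of the two multisets valid for every $\ell\ge 1$, whence the multisets of eigenvalues (with algebraic multiplicity) coincide---no diagonalizability needed. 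To repair your proof you would either need to restrict the proposition to undirected graphs, or replace your first paragraph by this trace-of-$(A^\alpha)^{\ell}$ counting argument (or some other device, e.g.\ a similarity transform exhibiting $A^\alpha$ as block-equivalent to $\bigoplus_\rho d_\rho\,\rho(B)$, which is in effect what the orthogonality relations are doing).
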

The right-hand side of \eqref{babai-gen} is equal to $\sum_{u\in V} \chi^\rho((B^{\ell})_{u,u})$, a quantity we may denote $\chi^\rho(\tr (B^{\ell}))$ by extending $\chi^\rho$ as done in \eqref{eq:chiB}. In particular, when $d_\rho=1$ for some $\rho$, in the notation of the proof of Theorem \ref{t:spec}, we simply have $\lambda_{u,1} =\mu_{u,1}$ for every $u\in V$. For instance, when $G$ is Abelian, this holds for every irreducible representation of $G$. This case was dealt with by Miller and Ryan together with the first two and the last authors in \cite{dfmrs17}.

\begin{example}
{\rm Consider the dumbbell graph of Figure \ref{fig1} again, with regular voltages in the symmetric group ${\rm Sym}(3)=\langle g, h \rangle$, and with the base matrix given by \eqref{base-matrix}:
$$
B=
\left(
\begin{array}{cc}
2g & e\\
e & 2h
\end{array}
\right).
$$
The corresponding regular lift $\Gamma_0^{\alpha}$ is shown in Figure \ref{fig2}. A set of irreducible characters for the group ${\rm Sym}(3)$ can be taken from traces of the representations $\iota,\pi,\sigma$ used in Section \ref{sec:example}; these are shown in Table \ref{charac-table-S3}.
\begin{table}[ht]
	\centering
	\begin{tabular}{|c||c|c|c|}
		\hline
		$S_3\quad \backslash\quad g$  & $e$  & $g,h,ghg$ & $gh,hg$ \\
		\hline\hline
		$\chi^\iota$ $(d_1=1)$  &  $1$  & $1$   &  $1$    \\
		\hline
		$\chi^\pi$ $(d_2=1)$  &  $1$  & $-1$   &  $1$   \\
		\hline
		$\chi^\sigma$ $(d_3=2)$  &  $2$  & $0$   &  $-1$    \\
		\hline
	\end{tabular}
	\caption{The character table of the symmetric group ${\rm Sym}(3)$.}
	\label{charac-table-S3}
\end{table}
To construct the spectrum of the regular lift $\Gamma_0^\alpha$ by Proposition~\ref{propo-chi}, it remains to evaluate the quantities $\chi^\rho(\tr(B^\ell))$ for our three characters and appropriate powers $\ell$, which gives the following.
\begin{itemize}
\item For $\chi^\iota$:
	Since $d_\iota=1$, the corresponding two eigenvalues of $\Gamma^{\alpha}$ are
    $$
	\{3,-1\}=\Sp (\chi^\iota (B)).
	$$
\item For $\chi^\pi$:
	Since $d_\pi=1$, the two corresponding eigenvalues of $\Gamma^{\alpha}$ are
	$$
	\{-3,1\}=\Sp (\chi^\pi (B)).
	$$
\item For $\chi^\sigma$:
	Since $d_\sigma=2$, we need to consider the traces of all the $\ell$-th powers of $B$ for  $\ell\in \{1,2,3,4{=}d_\sigma|V|\}$ in the group algebra $\C(G)$, which turn out to be
	\begin{align*}
	\tr(B) \  &=2 g + 2 h,\\
	\tr(B^2) &=4g^2+4h^2+2e=10e,\\
	\tr(B^3) &=8g^3+8h^3+6g+6h=14g+14h, \\
	\tr(B^4) &= 16g^4+16h^4+16g^2+16gh+16h^2+2e=66e+16gh.
	\end{align*}
	By Proposition~\ref{propo-chi} and the remark following it, the traces give the nonlinear system
	\begin{align*}
	\lambda_{u,0}+\lambda_{u,1}+\lambda_{v,0}+\lambda_{v,1} &=\chi^\sigma(\tr(B))=0,      \\
	\lambda_{u,0}^2+\lambda_{u,1}^2+\lambda_{v,0}^2+\lambda_{v,1}^2 &=\chi^\sigma(\tr(B^2))=20,\\
	\lambda_{u,0}^3+\lambda_{u,1}^3+\lambda_{v,0}^3+\lambda_{v,1}^3 &=\chi^\sigma(\tr(B^3))=0,\\
	\lambda_{u,0}^4+\lambda_{u,1}^4+\lambda_{v,0}^4+\lambda_{v,1}^4 &=\chi^\sigma(\tr(B^4))=116,
	\end{align*}
	with solution $\pm \sqrt{7}, \pm \sqrt{3}$. As these must be
	taken twice, the spectrum of $\Gamma_0^{\alpha}$ is
	$$
	\Sp(\Gamma_0^{\alpha})=\{3^{(1)},\sqrt{7}^{(2)},\sqrt{3}^{(2)},
	1^{(1)},-1^{(1)},-\sqrt{3}^{(2)},-\sqrt{7}^{(2)},-3^{(1)}\}.
	$$
	%$$
	\end{itemize}
\vskip -2mm
\noindent Note that, as expected,
$\Sp(\Gamma)\subset \Sp(\Gamma^{\alpha})\subset \Sp(\Gamma_0^{\alpha})$. }
\end{example}

\begin{figure}[h!t]
	\begin{center}
		\scalebox{0.95}
		{\includegraphics{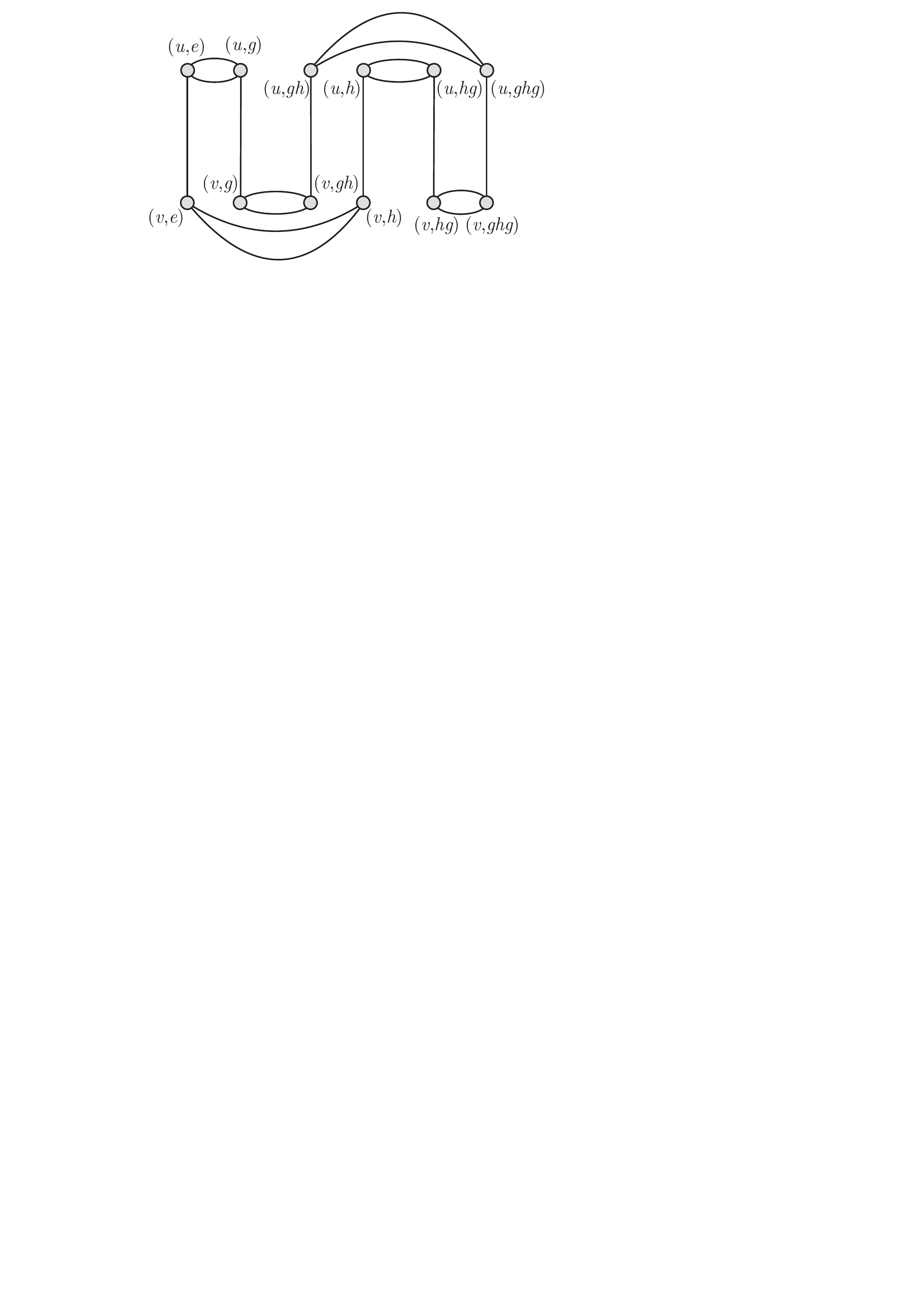}}
		\vskip -22.6cm
		%\hleft -2cm
		\caption{The ordinary (regular) lift $\Gamma_0^{\alpha}$ of our base dumbbell graph $\Gamma$.
		\label{fig2}}
	\end{center}
\end{figure}

\noindent{\bf Acknowledgment}~ The research of the two first authors is partially supported by the project 2017SGR1087 of the Agency for the Management of University and Research Grants (AGAUR) of the Government of Catalonia. The third and the fourth authors acknowledge support from the APVV Research Grants 15-0220 and 17-0428, and the VEGA Research Grants 1/0142/17 and 1/0238/19.


\begin{thebibliography}{10}\label{bibliography}
	
\bibitem{ba79}
L. Babai, Spectra of Cayley graphs, {\em J. Combin. Theory Ser. B} {\bf 27} (1979) 180--189.
	
\bibitem{biggs}
N. Biggs, {\it Algebraic Graph Theory}, Cambridge Univ. Press, Cambridge, 2nd ed., 1993.
	
\bibitem{b93}
M. Burrow, {\em Representation Theory of Finite Groups}, Dover, New York, 1993.
	
\bibitem{cds95}
D.  Cvetkovi\'c, M. Doob, and H. Sachs, {\em Spectra of Graphs. Theory and Applications}, 3rd edition, Johann Ambrosius Barth, Heidelberg, 1995.
	
\bibitem{dfmrs17}
C. Dalf\'o, M. A. Fiol, M. Miller, J. Ryan, and J. \v{S}ir\'a\v{n},
An algebraic approach to lifts of digraphs,
{\em Discrete Appl. Math.} (online 2018),
{\tt \small doi.org/10.1016/j.dam.2018.10.040}.
	
\bibitem{dfs19}
C. Dalf\'o, M. A. Fiol, and J. \v{S}ir\'a\v{n},
The spectra of lifted digraphs, {\em J. Algebraic Combin.} (online 2019),  {\tt \small doi.org/10.1007/s10801-018-0862-y}.

\bibitem{vdh09}
E. R. van Dam and W. H. Haemers,
Developments on spectral characterizations of graphs, {\em Discrete Math.} {\bf 309} (2009) 576--586.
	
%\bibitem{d10}
%R. Diestel,
%{\em Graph Theory} (4th ed.), Graduate Texts in Mathematics {\bf 173}, Springer-Verlag, Heilderberg, 2010.

\bibitem{e79}
C. L. Ezell,
Observations on the construction of covers using permutation voltage assignments,
{\em Discrete Math.} {\bf 28} (1979), no. 1, 7--20.
	
\bibitem{fkl04}
R. Feng, J. H. Kwak, and J. Lee, Characteristic polynomials of graph coverings, {\em Bull. Austral. Math. Soc.} {\bf 69} (2004) 133--136.
	
\bibitem{gh92}
C. D. Godsil and A. D. Hensel,	Distance regular covers of the complete graph, {\em J. Combin. Theory Ser. B}
{\bf 56} (1992), no. 2, 205--238.
	
\bibitem{go99}
H. W. Gould,
The Girard-Waring power sum formulas for symmetric functions and Fibonacci sequences, {\em Fibonacci Quart.} {\bf 37} (1999), no. 2, 135--140.
	
\bibitem{gt77}
J. L. Gross and T. W. Tucker,
Generating all graph coverings by permutation voltage assignments,
{\em Discrete Math.} {\bf 18} (1977) 273--283.

\bibitem{gt87}
J. L. Gross and T. W. Tucker, {\em Topological Graph Theory}, Wiley, New York, 1987 (Dover, 2001).

\bibitem{kl92}
J. H. Kwak and J. Lee, Characteristic polynomials of some graph bundles II, {\em Linear Multilinear Algebra} {\bf 32} (1992) 61--73.
	
\bibitem{kk01}
J. H. Kwak and Y. S. Kwon, Characteristic polynomials of graph bundles having voltages in a dihedral group, {\em Linear Algebra Appl.} {\bf 336} (2001) 99--118.
	
\bibitem{JaLi}
G. James and M. Liebeck, {\em Representations and Characters of Groups}, 2nd ed., Cambridge Univ. Press, 2001.
	
\bibitem{l75}
L. Lov\'asz, Spectra of graphs with transitive groups, {\em Period. Math. Hungar.} {\bf 6} (1975) 191--196.
	
\bibitem{ms95}
H. Mizuno and I. Sato, Characteristic polynomials of some graph coverings, {\em Discrete Math.} {\bf 142} (1995) 295--298.
		
\end{thebibliography}
\end{document}